\renewcommand{\leq}{\leqslant}
\renewcommand{\geq}{\geqslant}
\newcommand{\be}{\begin{equation}}
\newcommand{\ee}{\end{equation}}
\newcommand{\p}{\partial}
\newcommand{\ol}{\overline}
\newcommand{\ul}{\underline}
\begin{document}
\newtheorem{claim}{Claim}
\newtheorem{theorem}{Theorem}[section]
\newtheorem{lemma}[theorem]{Lemma}
\newtheorem{corollary}[theorem]{Corollary}
\newtheorem{proposition}[theorem]{Proposition}
\newtheorem{question}{question}[section]
\newtheorem{definition}[theorem]{Definition}
\newtheorem{remark}[theorem]{Remark}

\numberwithin{equation}{section}

\title[Prescribed curvature equations]
{The Dirichlet problem for a class of prescribed curvature equations}

\author[H. Jiao]{Heming Jiao}
\address{School of Mathematics and Institute for Advanced Study in Mathematics, Harbin Institute of Technology,
         Harbin, Heilongjiang, 150001, China}
\email{jiao@hit.edu.cn}

\author[Z. Sun]{Zaichen Sun}
\address{School of Mathematics, Harbin Institute of Technology,
         Harbin, Heilongjiang, 150001, China}
\email{16b912005@stu.hit.edu.cn}

\begin{abstract}
In this paper, we consider the Dirichlet problem for a class of prescribed curvature equations.
Both degenerate and non-degenerate cases are considered.
The existence of the $C^{1,1}$ regular graphic hypersurfaces
with prescribing a class of curvatures and constant boundary is proved for the degenerate case.

{\em Keywords:} Prescribed curvature equations; Degenerate; $C^{1,1}$ regularity.
\end{abstract}

\maketitle

\section{Introduction}
One of classic problems in differential geometry is to find hypersurfaces with prescribed curvatures
and boundary data such as Plateau problem and its fully nonlinear generalizations (\cite{GS93,GS02}).
If the boundary is the graph of a given function $\varphi: \partial \Omega \rightarrow \mathbb{R}$,
where $\Omega$ is a domain in $\mathbb{R}^n$, such problems can be simplified to find a graphic hypersurfaces
in $\mathbb{R}^{n+1}$ which are equivalent to solving Dirichlet problems of the form
\begin{equation}
\label{js-1}
\left\{ \begin{aligned}
   f (\kappa[M_{u}]) & = \psi & \;\;\mbox{ in } &\Omega, \\
                 u &= \varphi & \;\;\mbox{ on }~ &\partial \Omega, &
\end{aligned} \right.
\end{equation}
where $M_u=\{(x,u(x)); x\in\Omega\}$ is the graphic hypersurface defined by the function $u$,
$\kappa[M_u] = (\kappa_1, \ldots, \kappa_n)$ are the principal curvatures of $M_u$ and $f$
is the curvature function. In particular, when $f = \kappa_1 + \cdots + \kappa_n$, $f = \sum_{i < j} \kappa_i \kappa_j$
and $f = \kappa_1 \cdots \kappa_n$, \eqref{js-1} are prescribed mean curvature, scalar curvature and
Gauss curvature equations respectively.

We call a $C^2$ regular hypersurface $M \subset \mathbb{R}^{n+1}$ $(\eta, n)$-convex if its principal curvature vector
$\kappa (X) \in \ol \Gamma$, where $\Gamma$ is a symmetric cone defined by
\[
\Gamma := \{\kappa = (\kappa_1, \ldots, \kappa_n) \in \mathbb{R}^n: \lambda_i = \sum_{j \neq i} \kappa_i > 0, i = 1, \ldots, n\}
\]
for all $X \in M$. A $C^2$ function $u: \Omega \rightarrow \mathbb{R}$ is called admissible if its graph
is $(\eta, n)$-convex.

Such hypersurface is useful to describe the boundaries of Riemannian manifolds
which have the homotopy type of a CW-complex (\cite{Sha86, Wu87}) and were studied by Sha \cite{Sha86,Sha87}, Wu \cite{Wu87}
and Harvey-Lawson \cite{HL13} intensively. It is worth mentioning that an $(\eta, n)$-convex hypersurface was called $(n-1)$-convex
in these references. Let $H (X)$ be the mean curvature of $M$ at $X \in M$. Define the $(0, 2)$-tensor field $\eta$ on $M$ by
\[
\eta = H g - h,
\]
where $g$ and $h$ are the first and second fundamental forms of $M$ respectively. Obviously a hypersurface $M$ is $(\eta, n)$-convex
if and only if $\eta$ is positive definite at each point of $M$. To measure the $(\eta, n)$-convexity, it is natural to introduce the
$(\eta, n)$-curvature at $X \in M$: $K_\eta (X) := \lambda_1 (X) \cdots \lambda_n (X)$. It is clear that
\[
K_\eta (X) = \det (g^{-1} \eta (X)).
\]

In this paper, we are concerned with the existence of graphic $(\eta, n)$-convex hypersurfaces with prescribed $(\eta, n)$-curvature and boundary.
In particular, we consider the Dirichlet problem
\begin{equation}
\label{js-2}
\left\{ \begin{aligned}
   K_\eta [M_u] & = \psi (X, \nu (X)) & \;\; X = (x, u(x)) , x \in \Omega, \\
                 u (x) &= 0 & \;\;~ x \in \partial \Omega,
\end{aligned} \right.
\end{equation}
where $\nu (X)$ denotes the upward unit normal vector to $M_{u}$ at $X \in M_{u}$.
In the current work, the function $\psi$ is allowed to vanish somewhere so that the equation
\eqref{js-2} is degenerate.
 
To obtain the second order boundary estimates, we usually need  
some geometric conditions on $\Omega$. A bounded domain $\Omega$ in $\mathbb{R}^n$ is called uniformly $(k-1)$-convex 
if there exists a positive constant
$K$ such that for each $x \in \partial \Omega$,
\[
(\kappa^b_1 (x), \ldots, \kappa^b_{n - 1} (x), K) \in \Gamma_{k},
\]
where $\kappa^b_1 (x), \ldots, \kappa^b_{n - 1} (x)$ are the principal curvatures of $\partial \Omega$ at
$x$ and $\Gamma_k$ is the G\r{a}rding's cone
\[
\Gamma_k=\{\kappa \in \mathbb R^n \ | \quad \sigma_m(\kappa)>0,
\quad  m=1,\cdots,k\}.
\] 
It is easy to see $\Gamma_2 \subset \Gamma$ and that $(n-1)$-convexity is equivalent to strict convexity.

Throughout the paper, $\Omega$ is always assumed to be uniformly $2$-convex for $n \geq 3$ and
and strictly convex for $n=2$.
Our main results are stated in the following theorem.
\begin{theorem}
\label{js-thm1}
Let $\Omega$ be a bounded in $\mathbb{R}^n$ with $\partial \Omega \in C^{2,1}$.
Suppose
\begin{equation}
\label{condition}
\psi^{1/(n-1)} = \psi^{1/(n-1)} (x, z, \nu) \in C^{1,1} (\mathbb{R}^n \times \mathbb{R} \times \mathbb{S}^n) \geq 0
\end{equation}
and $\psi_z \geq 0$.
Assume that that there exists an admissible sub-solution $\underline{u} \in C^{1,1} (\overline{\Omega})$ satisfying
$\kappa [M_{\underline{u}}] \in \Gamma$ and
\begin{equation}
\label{subsol}
\left\{ \begin{aligned}
   K_\eta [M_{\underline{u}}] & \geq \psi (\ul X, \ul \nu (\ul X))  & \;\; \ul X = (x, \ul u(x)), x \in \Omega,\\ 
                 \ul u (x) &= 0  & \;\;~ x \in \partial \Omega,
\end{aligned} \right.
\end{equation}
where $\nu (\ul X)$ denotes the upward unit normal vector to $M_{\ul u}$ at $\ul X \in M_{\ul u}$.
In addition, there exists a function $\ul \psi = \ul \psi (x, z) \in C^0 (\ol \Omega \times \mathbb{R}) \geq 0$
satisfying $\ul \psi \not\equiv 0$ on $\ol \Omega \times [-\epsilon, 0]$ for any $\epsilon > 0$ such that
\begin{equation}
\label{add-8}
\psi (x, z, \nu) \geq \ul \psi (x, z) \mbox{ for }(x, z, \nu) \in \ol \Omega \times \mathbb{R}\times \mathbb{S}^n.
\end{equation}
Then there exists a unique admissible solution $u \in C^{1,1} (\ol \Omega)$ of \eqref{js-2}.
\end{theorem}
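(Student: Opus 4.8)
\emph{Overall strategy.} My plan is to realise the (possibly degenerate) solution of \eqref{js-2} as a $C^{1,1}$ limit of non-degenerate problems. I would write the equation in the form $\bigl(K_\eta[M_u]\bigr)^{1/n}=\psi^{1/n}$; since $\eta[M_u]=H[M_u]\,g-h[M_u]$ is, for fixed gradient, an affine function of $D^2u$ and $A\mapsto(\det_g A)^{1/n}$ is concave on positive-definite matrices, the operator $u\mapsto\bigl(K_\eta[M_u]\bigr)^{1/n}$ is concave and, on admissible functions, degenerate-elliptic with degeneracy located exactly on $\{\psi=0\}$ — and the exponent $1/(n-1)$ in \eqref{condition} is precisely the one matching the Monge--Amp\`ere-type structure $K_\eta=\det_g\eta$. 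For $\ve>0$ I set $\psi_\ve^{1/(n-1)}:=\psi^{1/(n-1)}+\ve$, so that $\psi_\ve\searrow\psi$, $\psi_\ve>0$, $(\psi_\ve)_z\ge0$, and $\|\psi_\ve^{1/(n-1)}\|_{C^{1,1}}$ is bounded uniformly in $\ve$; after producing a subsolution $\ul u_\ve\in C^{1,1}(\ol\Omega)$ of the $\psi_\ve$-problem with $\ul u_\ve|_{\p\Omega}=0$ and $\ul u_\ve\to\ul u$ uniformly (a technical step combining a small concave perturbation of $\ul u$ vanishing on $\p\Omega$ with $\psi_z\ge0$, and where \eqref{add-8} is convenient), the non-degenerate problem $K_\eta[M_{u^\ve}]=\psi_\ve$ in $\Omega$, $u^\ve=0$ on $\p\Omega$, admits a unique admissible solution $u^\ve\in C^{2,\alpha}(\ol\Omega)$ by the method of continuity — this is the non-degenerate case, and its proof rests on the a priori estimates below together with Evans--Krylov and Schauder theory.

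\emph{The uniform estimates.} The core of the argument is the bound $\|u^\ve\|_{C^{1,1}(\ol\Omega)}\le C$ \emph{independent of $\ve$}. The $C^0$ bound is immediate from the comparison principle (available because the operator is elliptic, concave and $\psi_z\ge0$): the hyperplane $0$ is a supersolution and $\ul u_\ve$ a subsolution with the same boundary values, so $\ul u_\ve\le u^\ve\le0$. For the gradient I would first get $|Du^\ve|\le C$ on $\p\Omega$ by squeezing $u^\ve$ between $\ul u_\ve$ and $0$, and then derive the interior bound from the interior gradient estimate for prescribed-curvature equations (Trudinger/Korevaar type); I expect $\ul\psi$ and \eqref{add-8} to enter precisely here, since in a degenerate setting an interior gradient estimate requires quantitative non-degeneracy of the right-hand side, and $\ul\psi\not\equiv0$ near $z=0$ supplies it near the boundary where $u^\ve\approx0$. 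The decisive — and, I expect, hardest — step is the $\ve$-uniform bound on $D^2u^\ve$. Interiorly I would run a Pogorelov/Guan--Trudinger--Wang argument on an auxiliary quantity such as $\log\lambda_{\max}(D^2u^\ve)$ plus lower-order and cutoff terms, absorbing the potentially singular first-order terms produced by differentiating the equation twice by means of the elementary inequality $|D\psi_\ve^{1/(n-1)}|^2\le 2\,\|D^2\psi_\ve^{1/(n-1)}\|_{L^\infty}\,\psi_\ve^{1/(n-1)}$, valid for every nonnegative $C^{1,1}$ function — this is exactly why the regularity hypothesis is imposed on $\psi^{1/(n-1)}$ rather than on $\psi$. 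On $\p\Omega$ I would treat the three types of pure second-order boundary derivatives separately: the double-tangential ones from $u^\ve|_{\p\Omega}=0$ and $\p\Omega\in C^{2,1}$; the mixed tangential--normal ones from a barrier built from $\ul u_\ve$ and the uniform $2$-convexity of $\Omega$; and the normal--normal one — the classical difficulty — from the equation together with the uniform $\Gamma_2$-convexity of $\p\Omega$, which is exactly the geometric hypothesis tailored to this operator. All constants depend only on $n$, $\Omega$, $\|\ul u\|_{C^{1,1}}$, $\|\psi^{1/(n-1)}\|_{C^{1,1}}$ and $\ul\psi$, hence are uniform in $\ve$.

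\emph{Passage to the limit and uniqueness.} Given $\|u^\ve\|_{C^{1,1}(\ol\Omega)}\le C$ uniformly, a subsequence converges $u^\ve\to u$ in $C^{1,\beta}(\ol\Omega)$ for every $\beta<1$, with $u\in C^{1,1}(\ol\Omega)$, $u|_{\p\Omega}=0$ and $\ul u\le u\le0$; since $D^2u^\ve$ is bounded in $L^\infty$ and admissibility $\kappa[M_{u^\ve}]\in\ol\Gamma$ is a closed condition, $u$ is an admissible $C^{1,1}$ solution of \eqref{js-2}, interpreted almost everywhere (equivalently, in the viscosity sense). For uniqueness, if $u_1,u_2$ are two admissible $C^{1,1}$ solutions I would write the difference of the two equations as an integral along the segment $u_t=tu_1+(1-t)u_2$, obtaining for $w:=u_1-u_2$ a linear degenerate-elliptic inequality whose zeroth-order coefficient has a favourable sign thanks to $\psi_z\ge0$; since $w=0$ on $\p\Omega$ the maximum principle forces $w\equiv0$.

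\emph{Main obstacle.} I expect the genuine difficulty to be the $\ve$-uniform interior second-order estimate — carrying the Pogorelov-type computation through with all constants controlled right up to the degenerate locus $\{\psi=0\}$, which is exactly what dictates the $C^{1,1}$ hypothesis on $\psi^{1/(n-1)}$ — together with the normal--normal boundary estimate for this particular curvature operator, where the uniform $\Gamma_2$-convexity of $\Omega$ must be exploited in just the right way.
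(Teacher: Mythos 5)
Your overall framework---regularize $\psi$, solve the non-degenerate problems, obtain uniform $C^{1,1}$ estimates, pass to the limit---is exactly the paper's strategy. But three of your technical steps either go in the wrong place or miss the key difficulty.

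First, you locate the use of $\ul\psi$ and \eqref{add-8} in the interior gradient estimate, claiming an interior gradient bound needs quantitative non-degeneracy of $\psi$. That is not what happens here: the paper's gradient estimate (Theorem~\ref{gradient}) is a global maximum-principle argument on $Q=\log w+Bu$ that needs only $\psi^{1/n}\in C^1$ and $\psi_z\ge 0$, and the crucial step is $F^{11}\ge\delta_0\sum F^{ii}=\delta_0(n-1)\sigma_{n-1}(\eta)\ge\delta_1\psi^{1-1/n}$, which makes the bad terms absorbable with no lower bound on $\psi$. The hypothesis on $\ul\psi$ enters only at the very end of Section~5, to get a lower bound $|u_n|\ge\gamma_1$ on the boundary via a comparison with a prescribed-mean-curvature solution, which in turn yields $S_{1;n}(D^2u,Du)\ge\gamma_2>0$ and hence the $u_{nn}$ bound from the structure \eqref{js-5}. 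If you run your scheme you will not be able to close the normal--normal boundary estimate without this.

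Second, you propose to exploit the concavity of $K_\eta^{1/n}$ while assuming only $\psi^{1/(n-1)}\in C^{1,1}$, and even note yourself that $1/(n-1)$ ``matches the Monge--Amp\`ere-type structure''---but these two exponents are incompatible, and this mismatch is precisely the obstruction the paper flags: one cannot differentiate $\psi^{1/n}$ twice with uniform bounds. The paper circumvents this with Lemma~\ref{lem-S-2} (from Guan--Li--Li), which is the concavity of $(\sigma_n/\sigma_1)^{1/(n-1)}$ packaged as inequality \eqref{GLL-1}; the exponent $1/(n-1)$ there is what matches the regularity of $\psi^{1/(n-1)}$. Relatedly, you never address the dependence of $\psi$ on $\nu$, which produces genuinely new terms $\nabla_j H\,(d_\nu\psi)(e_j)$ upon differentiating the equation; controlling these is the reason for the Case~1/Case~2 dichotomy using $\sigma_{n-1}(\eta)\ge c_0\sigma_1^\alpha\sigma_n^{1-\alpha}$ in Section~4, and it is also exactly why the paper says the barrier method of \cite{CNSV} does not generalize.

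Third, you say the mixed tangential--normal boundary estimate comes ``from a barrier built from $\ul u_\ve$.'' The paper explicitly remarks that for this curvature operator the subsolution is \emph{not} strong enough to build barriers (the principal curvatures are eigenvalues of a much more complicated matrix than $D^2u$); the barrier $\Psi=v-td+\frac{N}{2}d^2$ is built instead from the $2$-convexity of $\partial\Omega$ via Proposition~\ref{prop-I}, and the essential computational input is Lemma~\ref{BC2-lem1}, which controls $L W$ for the tangential-derivative quantity $W=\nabla'_\alpha u-\frac12\sum u_\beta^2$ and uses the special $\sigma_n(\eta)$ structure. The subsolution is used only for $C^0$ and boundary gradient bounds.

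So: same high-level route, but you would get stuck at the global $C^2$ estimate (concavity exponent, $\nu$-dependence) and at the mixed and normal--normal boundary estimates unless you replace the pieces above with the GLL12 lemma, the $\sigma_{n-1}$ case analysis, Lemma~\ref{BC2-lem1}, and the $\ul\psi$-based normal derivative lower bound.
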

To prove Theorem \ref{js-thm1}, we need the solvability of the non-degenerate equation \eqref{js-2} which is
stated in the next theorem.
\begin{theorem}
\label{js-thm2}
Let $\Omega$ be a bounded domain in $\mathbb{R}^n$ with smooth boundary $\partial \Omega$.
Suppose $\psi = \psi (x, z, \nu) \in C^\infty (\ol \Omega \times \mathbb{R} \times \mathbb{S}^n) > 0$ and $\psi_z \geq 0$.
Assume that there exists a subsolution $\ul u$ as in Theorem \ref{js-thm1}. Then there exists a unique admissible
solution $u \in C^{\infty} (\ol \Omega)$ of \eqref{js-2}.
\end{theorem}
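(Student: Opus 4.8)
The plan is to prove Theorem~\ref{js-thm2} by the method of continuity, so the heart of the matter is a priori estimates up to $C^{2,\alpha}(\ol\Omega)$ for admissible solutions, followed by the standard bootstrap. First I would recast \eqref{js-2} as a curvature equation of Caffarelli--Nirenberg--Spruck type. Writing $\lambda(\kappa)_i=\sum_{j\neq i}\kappa_j$, the map $\kappa\mapsto\lambda(\kappa)$ is linear and invertible, and $K_\eta[M_u]=\sigma_n(\lambda[M_u])$, so the equation becomes $F(D^2u,Du)=\psi^{1/n}$, where $F$ is induced by the curvature function $f(\kappa):=\sigma_n(\lambda(\kappa))^{1/n}$ on the open cone $\Gamma$. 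On $\Gamma$ one checks the usual structural conditions: $f>0$, $\partial f/\partial\kappa_i>0$ for every $i$ (ellipticity), $f$ is concave — being $\sigma_n^{1/n}$, concave on the positive orthant, precomposed with a linear map — and $f\to 0$ on $\partial\Gamma$. Thus the operator is elliptic and concave on admissible functions, and the hypotheses $\psi>0$, $\psi_z\ge 0$ together with the admissible subsolution $\ul u$ place us exactly in the setting where the CNS machinery applies. I would then run the continuity method along a path of equations for which $\ul u$ remains an admissible subsolution throughout and with a solvable endpoint: openness follows from the implicit function theorem, since the linearization $Lw=F^{ij}w_{ij}+b^iw_i+cw$ has $c\le 0$ (using $\psi_z\ge 0$) and hence is invertible on the $C^{2,\alpha}(\ol\Omega)$ functions vanishing on $\partial\Omega$; closedness is precisely the a priori estimates below.

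For the zeroth and first order estimates: the comparison principle (valid by ellipticity, concavity and $\psi_z\ge 0$) gives $\ul u\le u$, while admissibility forces the mean curvature $H[M_u]>0$, so $u$ is a subsolution of the minimal surface equation and the maximum principle yields $u\le\sup_{\partial\Omega}u=0$; hence $\|u\|_{C^0}\le C$. The boundary gradient estimate comes from comparing $u$ against $\ul u$ and against $\pm(\text{large constant})\cdot\vartheta$, where $\vartheta$ is a defining function of $\Omega$; since $u=\ul u=0$ on $\partial\Omega$ this pins $|Du|$ on $\partial\Omega$. The global bound $\sup_\Omega|Du|\le C$ then follows from the maximum principle applied to an auxiliary function of $|Du|^2$ and $u$, using admissibility and the bounds on $|D_x\psi|$ and $|D_\nu\psi|$; the dependence of $\psi$ on $\nu$ must be tracked here but causes no essential trouble.

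The core of the argument is the $C^2$ estimate. As usual I would reduce the global bound $\sup_\Omega|D^2u|\le C$ to the boundary bound $\sup_{\partial\Omega}|D^2u|\le C$ by applying the maximum principle to $W=\max_i\kappa_i[M_u]$ multiplied by an exponential cutoff in $|Du|^2$ and $u$, exploiting concavity of $f$ to absorb the third-order terms and the estimates already obtained to control the lower-order terms. On the boundary, the pure tangential second derivatives are read off directly from differentiating $u=0$ along $\partial\Omega$ together with the $C^1$ bound; the mixed tangential--normal derivatives are controlled by a barrier of the form $A\vartheta+B|x-x_0|^2$ applied to a tangential derivative of $u-\ul u$, where the uniform $2$-convexity of $\Omega$ (and $\Gamma_2\subset\Gamma$) guarantees the needed sign of $F^{ij}\vartheta_{ij}$. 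The genuinely delicate point, which I expect to be the main obstacle, is the double normal derivative $u_{\nu\nu}$ on $\partial\Omega$: one bounds it from above by combining the already-controlled $(n-1)$ tangential directions with the equation — since $\sigma_n(\lambda)$ is bounded below by $\inf\psi^{1/n}>0$, non-degeneracy prevents the remaining eigenvalue from blowing up — and from below by admissibility. Making this rigorous requires the careful barrier and eigenvalue analysis in the style of Ivochkina--Trudinger and Guan--Spruck, and it is exactly here that both $\psi>0$ and the uniform $2$-convexity of $\Omega$ are essential.

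With $\|u\|_{C^2(\ol\Omega)}\le C$ in hand, the equation is uniformly elliptic with concave $F$, so the Evans--Krylov theorem gives $\|u\|_{C^{2,\alpha}(\ol\Omega)}\le C$; linear Schauder estimates and a standard bootstrap then yield $u\in C^\infty(\ol\Omega)$, closing the continuity method and producing the solution. Uniqueness follows from the comparison principle using $\psi_z\ge 0$. The dependence of $\psi$ on the normal $\nu$ enters as a quasilinear rather than fully nonlinear contribution, handled throughout by writing $\nu=\nu(x,Du)$ and differentiating; it affects constants but not the structure of the proof.
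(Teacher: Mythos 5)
Your outline — recasting as a CNS curvature equation with $f(\kappa)=\sigma_n(\lambda(\kappa))^{1/n}$ on $\Gamma$, verifying ellipticity/concavity/limit behaviour, running the continuity method, getting $C^0$ and $C^1$ bounds from the subsolution and the positivity of the mean curvature, and closing with Evans--Krylov and Schauder — matches the paper's strategy (the paper proves the $C^2$ estimates in Sections 3--5 independently of $\inf\psi$ and then invokes Evans--Krylov, Schauder, and the continuity method, citing CNS for the standard parts). The global $C^2$ estimate is also in the same spirit, though the paper uses a test function $(v-a)^{-1}e^{\delta|X|^2/2}H$ combined with a Guan--Li--Li concavity inequality rather than the largest-curvature maximum you sketch.

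The genuine gap is in your treatment of the mixed tangential--normal second derivatives on $\partial\Omega$. You propose the standard CNS/Ivochkina barrier: apply the linearized operator to a tangential derivative of $u-\underline u$ and compare with $A\vartheta+B|x-x_0|^2$, citing $\Gamma_2\subset\Gamma$ to get the sign of $F^{ij}\vartheta_{ij}$. But here $\psi$ depends on $\nu$, equivalently on $Du$; when you differentiate the equation tangentially you pick up terms $\psi_{u_i}u_{i\alpha}$, and when the linearized operator acts on the tangential derivative these produce contributions (coming essentially from $G^s$ and from $\psi_{u_i}$) that the quadratic barrier cannot absorb. The authors explicitly flag this in their introduction: the CNS boundary barrier argument (their reference to ``(4.6) in CNS~V'') fails when $\psi$ depends on $\nu$, and they say they do not know how to make that method work in this case. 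Their actual argument replaces $\nabla'_\alpha(u-\underline u)$ by the quantity $W=\nabla'_\alpha u-\tfrac12\sum_{\beta<n}u_\beta^2$, proves a structure lemma (their Lemma~5.3) that exploits the specific form $K_\eta=\sigma_n(\lambda(\eta))$ through the simultaneous diagonalization of $(a_{ij})$ and $(F^{ij})$ and a two-case eigenvalue analysis, and only then builds the comparison with $\Psi=v-td+\tfrac{N}{2}d^2$. Without something playing the role of that lemma, the term $G^sW_s$ in the differentiated equation is uncontrolled, so the step ``barrier of the form $A\vartheta+B|x-x_0|^2$'' as you state it would fail.

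A secondary point: the bound on $u_{\nu\nu}$ is not quite ``one remaining eigenvalue cannot blow up because $\sigma_n\ge\inf\psi>0$''. The relevant eigenvalues are those of the full matrix $A[u]$ in \eqref{matrix}, and the paper instead expands $K_\eta$ as a polynomial in $u_{nn}$ (their identity \eqref{js-5}) with leading coefficient proportional to $S_{1;n}(D^2u,Du)$, and then shows that coefficient is bounded away from zero via a Hopf-lemma comparison with a prescribed mean curvature solution, giving $u_n(0)\le -\gamma_1<0$. In the non-degenerate case you could take $\underline\psi\equiv\inf\psi$ in that argument, so the conclusion is available; but it does require an argument, not just ``the equation forces it''.
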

The non-degenerate prescribed curvature equations have received extensively studies.
In particular, Caffarelli-Nirenberg-Spruck \cite{CNSV} studied the non-degenerate prescribed curvature equations
of general form with $\psi$ independent of $u$ and $\nu$ and $\varphi \equiv \mbox{ constant }$ in strictly convex domains.
Ivochkina \cite{Ivochkina90, Ivochkina91} studied the
Dirichlet problem for the prescribed curvature equation
\begin{equation}
\label{sigmak}
\sigma_k (\kappa) = \psi (x, u),
\end{equation}
where $\sigma_{k}$ is the $k$-th elementary symmetric function
\[
\sigma_{k} (\kappa) = \sum_ {i_{1} < \cdots < i_{k}}
\kappa_{i_{1}} \cdots \kappa_{i_{k}}, \quad \text{for $k=1,2,\cdots,n$},
\]
and her results were generalized by Lin-Trudinger \cite{LT94b} and Ivochkina-Lin-Trudinger \cite{ILT96} to the equation
\begin{equation}
\label{sigmak}
(\sigma_k/\sigma_l) (\kappa) = \psi (x, u).
\end{equation}
The reader is referred to \cite{CNSIV, CJ20, GG02, GLL12, GLM09, GRW15, RW19, RW20, SX17} for
more researches about non-degenerate curvature equations.

The degenerate curvature equations arise naturally in many geometric problems such as the degenerate Weyl problem studied by Guan-Li \cite{GL1} and Hong-Zuily \cite{HZ},
where the equations can be viewed as some two dimensional degenerate Monge-Amp\`ere type curvature equation. Guan-Li \cite{GL2} considered the prescribed
Gauss curvature measure problem, which also can be rewritten as some degenerate Monge-Amp\`ere type curvature equation.
As we know, the best regularity one can expect for degenerate curvature
equations is $C^{1,1}$. Recently, Jiao-Wang \cite{JW21} proved the existence of solutions in $C^{1,1} (\ol \Omega)$
for degenerate prescribed $k$-curvature equations \eqref{sigmak}. Guan-Zhang \cite{GZ} considered another class of curvature type equations which is the combination of  $\sigma_k$.

When $n=2$, equation \eqref{js-2} is the classic prescribed Gauss curvature equation. For general $n$,
\begin{equation}
\label{js-4}
K_\eta [M] = K_\eta (\kappa) = \sum_{i=2}^n \sigma_1 (\kappa)^{n-i} \sigma_i (\kappa).
\end{equation}
In \cite{CJ20} Chu-Jiao considered the curvature estimates for star-shaped hypersurfaces $M$, i.e., $M$ can be
represented by a radial graph of positive function on $\mathbb{S}^n$, satisfying the equation
\begin{equation}
\label{cj-1}
\sigma_k (\lambda (\eta(X)) = f (X, \nu (X)), \quad \text{for $X \in M$},
\end{equation}
where $\lambda (\eta)$ denote the eigenvalues of $\eta$ with respect to the metric $g$. When $k=n$, $\sigma_{k} (\lambda(\eta))$
is the $(\eta, n)$-curvature of $M$. It is an interesting problem to consider the Dirichlet problem \eqref{js-2} with
$K_\eta [M_u]$ replaced by $\sigma_{k} (\lambda(\eta))$.
Recently Yuan \cite{Yuan20} proved an inequality
for concave functions which may be applied to derive the second order interior estimates for more general equations
whose elliptic cones are not the positive cone $\Gamma_n$.

The corresponding Hessian type equation of \eqref{js-2},
\begin{equation}
\label{n-1MA}
\det (\Delta u I - D^2 u) = \psi (x, u, Du)
\end{equation}
is called $(n-1)$ Monge-Amp\`ere equation. Recently Dinew \cite{Dinew20}, Chu-Jiao \cite{CJ20}
studied the interior estimates for some Hessian type equations including \eqref{n-1MA} and its Dirichlet problem
was studied later \cite{JL20}. The complex analogue of \eqref{n-1MA} has been studied extensively since it is related to the Gauduchon conjecture
(see \cite[\S IV.5]{Gauduchon84}) which was solved by Sz\'ekelyhidi-Tosatti-Weinkove \cite{STW17} in complex geometry.
For more references, the reader is referred to \cite{FWW10,FWW15,Popovici15,Szekelyhidi18,TW17,TW19} and references therein.

Now we make some remarks on the conditions of Theorem \ref{js-thm1}. In general, the Dirichlet problem is not always solvable
without the existence of a subsolution. In this paper, we only use the subsolution to derive the lower bound of $u$ and the gradient
estimates on the boundary. Compared with Hessian type equations, the subsolution is not as powerful to construct barriers because
the principal curvatures are eigenvalues of a much more complicated matrix. We make use of the $2$-convexity ($n\geq 3$) of $\Omega$ to
construct suitable barriers for second order estimates on the boundary. However, more natural condition on the domain $\Omega$
should be that there exists a positive constant $K$ such that
\[
(\kappa^b_1 (x), \ldots, \kappa^b_{n-1} (x), K) \in \Gamma
\]
for any $x \in \partial \Omega$, where $\kappa^b_1 (x), \ldots, \kappa^b_{n-1} (x)$ are the principal curvatures of $\partial \Omega$
at $x$. Such domain is called admissible. It is of interest to ask if we can establish the second boundary estimates
in admissible domains. For non-negative functions, the condition
\begin{equation}
\label{add-1}
\psi^{1/(n-1)} \in C^{1,1} (\ol \Omega \times \mathbb{R} \times \mathbb{S}^n)
\end{equation}
is a little weaker than that $\psi^{1/n} \in C^{1,1} (\ol \Omega \times \mathbb{R} \times \mathbb{S}^n)$.
Guan-Trudinger-Wang \cite{GTW99} proved the $C^{1,1}$ regularity for the solution of degenerate Monge-Amp\`{e}re equation
\[
\det (D^2 u) = \psi (x) \geq 0  \mbox{ in } \Omega
\]
with $u = \varphi$ on $\partial \Omega$ under the condition $\psi^{1/(n-1)} \in C^{1,1} (\ol \Omega)$.
Examples of Wang \cite{Wang95} show that the condition is optimal. Note that the assumption \eqref{condition} in Theorem \ref{js-thm1}
is stronger than \eqref{add-1}. Let $\tilde{\psi} := \psi^{1/(n-1)}$. Indeed, \eqref{condition} implies \eqref{add-1} and that
\begin{equation}
\label{add-2}
|D \tilde{\psi} (x, z, \nu)| \leq C_{\mu_0} \sqrt{\tilde{\psi} (x, z, \nu)}
\end{equation}
for any $(x, z, \nu) \in \ol \Omega \times [-\mu_0, \mu_0] \times \mathbb{S}^n$ and some positive constant $C_{\mu_0}$ depending
only on $\|\tilde{\psi}\|_{C^{1,1} (\ol \Omega \times [-\mu_0, \mu_0] \times \mathbb{S}^n)}$
(seeing Lemma 3.1 in \cite{Blocki03}) which is only used to derive the gradient estimates. It is easy to see \eqref{add-2} yields that
\begin{equation}
\label{add-3}
\psi^{1/n} \in C^{1} (\ol \Omega \times \mathbb{R} \times \mathbb{S}^n).
\end{equation}
The existence of the function $\ul \psi$ satisfying the assumptions in Theorem \ref{js-thm1} is only used to establish the
estimates for normal-normal second order derivatives. If $\psi$ does not depend on $\nu$, we do not need the existence of $\ul \psi$
in Theorem \ref{js-thm1}.

The key step to prove Theorem \ref{js-thm1} is the establishment of the \emph{a priori} $C^2$ estimates for non-degenerate
equation \eqref{js-2} independent of the lower bound of $\psi$.
Thus, the existence of solutions in $C^{1, 1} (\ol \Omega)$ can be proved by a non-degenerate approximation.
The main parts of this work are the global and boundary estimates for second order derivatives.
Since the function $\psi$ may depend on the unit normal $\nu$, there are more troublesome terms when
we differentiating the equation \eqref{js-2}. We shall apply an idea of \cite{CJ20} to overcome this by using
some properties of $\sigma_{n-1}$ in the global estimates. Another difficulty in global estimates is that we cannot use the
concavity of $\sigma_n^{1/n}$ directly since we only assume \eqref{add-1}. For this we use an idea of \cite{JW21}
to apply a lemma from the concavity of $(\sigma_k/\sigma_1)^{1/(k-1)}$ which was proved by Guan-Li-Li \cite{GLL12}.
The main challenge for the boundary estimates
is from the degeneracy of the equation.
The key is a calculation of the linearized operator acting on the tangential derivatives
of the solution, namely, Lemma \ref{BC2-lem1}. We remark that the special
structure of the $(\eta, n)$-curvature plays an important role in the proof of Lemma \ref{BC2-lem1}.

It is worth mentioning that, in the non-degenerate case,
if $\psi \equiv \psi (x, u) > 0$ depends only on $X = (x, u(x)) \in M_u$,
the second order
boundary estimates can be obtained by an easy generalization of \cite{CNSV}. However, when
$\psi$ also depends on $\nu$, some uncontrollable terms arise when the linearized operator
acts on their barrier (seeing (4.6) in \cite{CNSV}). We do not have any idea to generalize
the method of \cite{CNSV} in such case yet.

This paper is organized as follows. In Section 2
some preliminaries are provided. The $C^1$ estimates are established in Section 3.
Section 4 and 5 are devoted to the global and boundary estimates for second order
derivatives respectively.

\bigskip

{\bf Acknowledgement.}
The first author is supported by the NSFC (Grant No. 11871243)
and the Natural Science Foundation of Heilongjiang Province (Grant No. LH2020A002).

\section{Preliminaries}

Throughout this paper, $\phi_i = \frac{\p \phi}{\p x_i}$, $\phi_{ij} = \frac{\p^2 \phi}{\p x_i \p x_j}$,
$D\phi=(\phi_{1}, \cdots, \phi_{n})$ and $D^2 \phi = (\phi_{ij})$ denote the ordinary first and second order derivatives, gradient and Hessian of
a function $\phi \in C^2 (\Omega)$ respectively.

A graphic hypersurface $M_u$ in $\mathbb{R}^{n+1}$ is a codimension one submanifold which can be written as a graph
\[ M_u=\{X=(x, u(x))| x\in\mathbb{R}^n\}.\]
Let
$\epsilon_{n+1} = (0, \cdots, 0, 1) \in \mathbb{R}^{n+1}$, then the height function of $M_u$ is $u(x)=\langle X, \epsilon_{n+1}\rangle$.
It's easy to see that the induced metric and second fundamental form of $M$ are given
by
$$g_{ij}=\delta_{ij}+ u_i u_j, \ \  1\leq i,j\leq n,$$
and
\[h_{ij}=\frac{u_{ij}}{\sqrt{1+|Du|^2}},\]
while the upward unit normal vector field to $M$ is
\[\nu=\frac{(-Du, 1)}{\sqrt{1+|Du|^2}}.\]
By straightforward calculations, we have the principle curvatures of $M_u$ are eigenvalues of the matrix
\[
\frac{1}{w} \big(I - \frac{Du \otimes Du}{w^2}\big) D^2 u
\]
or the symmetric matrix
$A [u] = (a_{ij})$:
\begin{equation}
\label{matrix}
a_{ij}=\frac{1}{w}\gamma^{ik}u_{kl}\gamma^{lj},
\end{equation}
where $\gamma^{ik}=\delta_{ik}-\frac{u_iu_k}{w(1+w)}$ and $w=\sqrt{1+|Du|^2}.$ Note that $(\gamma^{ij})$ is invertible with inverse
$\gamma_{ij}=\delta_{ij}+\frac{u_iu_j}{1+w},$ which is the square root of $(g_{ij})$.

For $r \in S^{n \times n}$ and $p \in \mathbb{R}^n$, define
\[
\lambda (r, p) = \lambda \Big(\big(I - \frac{p \otimes p}{1 + |p|^2}\big) r\Big)
\]
and
\[
S_k (r, p) = \sigma_k (\lambda (r,p)).
\]
As in \cite{ILT96}, we introduce the notations as follows. For $p \in \mathbb{R}^n$, $i = 1, \ldots, n$, let
$p(i)$ be the vector obtained by setting $p_i = 0$, $r (i)$ the matrix obtained by setting the $i^th$ row
and column to zero and $r (i,i)$ the matrix obtained by setting $r_{ii} = 0$. Denote
\[
S_{k,i} (r,p) = S_k (r(i), p(i)).
\]
Ivochkina-Lin-Trudinger \cite{ILT96} proved the formula
\begin{equation}
\label{ILT}
S_k (r, p) = \frac{1 + |p(i)|^2}{1+|p|^2} r_{ii} S_{k-1; i} (r, p) + O (|r (i,i)|^k)
\end{equation}
for all $1\leq i \leq n$ and $1\leq k \leq n$ and $S_0$ is defined by $S_0 \equiv 1$.
Combining \eqref{js-4} and \eqref{ILT}, we have
\begin{equation}
\label{js-5}
\begin{aligned}
K_\eta (M_u) = \,& \frac{1}{w^n}\sum_{i=2}^n S_1^{n-i} (D^2 u, Du) S_i (D^2 u, Du)\\
  = \,& \frac{1}{w^n} \left(\frac{1 + |Du(n)|^2}{1 + |Du|^2}\right)^{n-1}  S_{1;n} (D^2 u, Du) u_{nn}^{n-1}\\
   & + \sum_{i=1}^{n-2} P_i u_{nn}^{i} + P_0,
\end{aligned}
\end{equation}
where $P_i$ depend only on $u_{\alpha \beta}$ ($\alpha + \beta < 2n$) and $Du$.

For $\kappa \in \mathbb{R}^n$ let
\begin{equation}
\label{lambda}
\lambda_i = \sum_{j \neq i} \kappa_j, i = 1, \ldots, n.
\end{equation}
Define the function $f (\kappa)$ on the cone $\Gamma$
by
\begin{equation}
\label{def-h}
f (\kappa) := \lambda_1 \cdots \lambda_n.
\end{equation}
The function $f$ satisfies the following properties which may be used in the following sections. Their proofs can be found in \cite{JL20}.
\begin{equation}
\label{cj-10}
f_i (\kappa) = \frac{\partial f (\kappa)}{\partial \kappa_i} > 0, \mbox{ in } \Gamma, i = 1, \ldots, n;
\end{equation}
\begin{equation}
\label{cj-9}
f^{1/n} (\kappa) \mbox{ is concave in } \Gamma;
\end{equation}
\begin{equation}
\label{js-3}
f > 0 \mbox{ in } \Gamma \mbox{ and } f = 0 \mbox{ on } \partial \Gamma;
\end{equation}
\begin{equation}
\label{cj-6}
f_j (\kappa) \geq \delta_0 \sum_i f_i (\kappa), \mbox{ if } \kappa_j < 0, \forall \kappa \in \Gamma
\end{equation}
for some positive constant $\delta_0$ depending only on $n$ and for any constant $A > 0$ and any compact set $K$ in $\Gamma$
there is a number $R = R (A, K)$ such that
\begin{equation}
\label{cj-2}
f (\kappa_1, \ldots, \kappa_{n-1}, \kappa_n + R) \geq A, \mbox{ for all } \kappa \in K.
\end{equation}
We also need some some algebraic equalities and inequalities of $\sigma_k$ (seeing \cite{LT94}):
\[
\sum_i \sigma_{k-1; i}(\kappa) = (n-k+1) \sigma_{k-1}(\kappa),
\]
\[
\sum_i \sigma_{k-1; i} (\kappa) \kappa_i = k \sigma_k (\kappa),
\]
and
\begin{equation}
\label{NM-1}
\sigma_{k - 1} (\kappa) \geq c_0 \sigma_k^{1 - 1/(k-1)} (\kappa) \sigma_1^{1/(k-1)} (\kappa)
\end{equation}
\begin{equation}
\label{NM-2}
\sigma_{1} (\kappa) \geq c_0 \sigma_k^{1/k} (\kappa)
\end{equation}
for any $\kappa \in \Gamma_k$ and
some positive constant $c_0$ depending only on $n$ and $k$. Note that the last two inequalities are consequences of the Newton-Maclaurin inequalities.

Now, let $\{e_1,e_2,\cdots,e_n\}$ be a local orthonormal frame on $TM_u$. We will use $\nabla$ to denote
the induced Levi-Civita connection on $M.$ For a function $v$ on $M_u$, we denote $\nabla_i v=\nabla_{e_i}v,$
$\nabla_{ij} v = \nabla^2 v (e_i, e_j),$ etc in this paper.
Thus, we have
\[|\nabla u|=\sqrt{g^{ij}u_{i}u_{j}}=\frac{|Du|}{\sqrt{1+|Du|^2}}.\]

Using normal coordinates, we also need the following well known fundamental equations for a hypersurface $M$ in $\mathbb{R}^{n+1}:$
\begin{equation}\label{Gauss}
\begin{aligned}
\nabla_{ij} X = \,& h_{ij}\nu \quad {\rm (Gauss\ formula)}\\
\nabla_i \nu= \,& -h_{ij} e_j \quad {\rm (Weigarten\ formula)}\\
\nabla_k h_{ij} = \,& \nabla_j h_{ik} \quad {\rm (Codazzi\ equation)}\\
R_{ijst} = \,& h_{is}h_{jt}-h_{it}h_{js}\quad {\rm (Gauss\ equation)},
\end{aligned}
\end{equation}
where $h_{ij} = \langle \nabla_{e_i} e_j, \nu\rangle$,
$R_{ijst}$ is the $(4,0)$-Riemannian curvature tensor of $M$, and the derivative here is covariant derivative with respect to the metric on $M$.
Therefore, the Ricci identity becomes,
\begin{equation}\label{Commutation formula}
\nabla_{ij} h_{st} = \nabla_{st} h_{ij}+(h_{mt}h_{sj}-h_{mj}h_{st})h_{mi}+(h_{mt}h_{ij}-h_{mj}h_{it})h_{ms}.
\end{equation}

For $p \in \mathbb{R}^n$, define
\[
\Gamma_k (p) := \left\{r \in S^{n \times n}: \lambda (r, p) := \lambda \left(\left(I - \frac{p \otimes p}{1 + |p|^2}\right) r\right) \in \Gamma_k\right\}.
\]
To construct the barrier for second order estimates on the boundary, we need the following proposition proved by Ivochkina \cite{Ivochkina91}
first. Its proof can be also found in \cite{JW21}.
\begin{proposition}
\label{prop-I}
Suppose $k \leq n-1$.
For any $p\in\mathbb{R}^n$, we have $\Gamma_{k+1} (0) \subset \Gamma_k (p)$. Suppose  $r \in \Gamma_{k+1}$ is a $n\times n$ matrix,
then we have
\begin{equation}
\label{BC2-18}
\sigma_j (\lambda (r, p)) \geq \frac{1}{1 + |p|^2} \sigma_j (\lambda (r))
\end{equation}
for  $j = 1, \ldots, k$.
\end{proposition}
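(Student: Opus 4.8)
The plan is to reduce the statement to a short computation with principal minors. First I would note that all quantities are unchanged when $(r,p)$ is replaced by $(OrO^{T},Op)$ for orthogonal $O$, and that the claim is trivial for $p=0$; so I may assume $p=|p|e_1$ and set $w:=\sqrt{1+|p|^2}\ge1$. Then $I-\frac{p\otimes p}{1+|p|^2}=\gamma^{2}$ with $\gamma:=\mathrm{diag}(1/w,1,\ldots,1)$, and since $\big(I-\frac{p\otimes p}{1+|p|^2}\big)r=\gamma(\gamma r\gamma)\gamma^{-1}$ is similar to the symmetric matrix $\gamma r\gamma$, one has $\lambda(r,p)=\lambda(\gamma r\gamma)$.

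Next I would use that for a symmetric matrix $A$ one has $\sigma_j(\lambda(A))=\sum_{|I|=j}\det A_I$, the sum of the $j\times j$ principal minors. Since $\gamma$ is diagonal, the principal submatrix satisfies $(\gamma r\gamma)_I=D_Ir_ID_I$ with $D_I=\mathrm{diag}(\gamma_{ii})_{i\in I}$, so $\det((\gamma r\gamma)_I)$ equals $\det r_I$ when $1\notin I$ and $w^{-2}\det r_I$ when $1\in I$. Summing over $|I|=j$ and writing $r(1)$ for the $(n-1)\times(n-1)$ matrix obtained from $r$ by deleting the first row and column, this gives the identity
\[
\sigma_j(\lambda(r,p))=\frac{1}{w^{2}}\,\sigma_j(\lambda(r))+\Big(1-\frac{1}{w^{2}}\Big)\,\sigma_j(\lambda(r(1))),\qquad 1\le j\le n,
\]
and, with $\gamma$ replaced by $\mathrm{diag}(1/s,1,\ldots,1)$, the same identity with $w$ replaced by any $s\ge1$.

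From here both conclusions would follow. If $\lambda(r)\in\Gamma_{k+1}$, the classical restriction property of G\r{a}rding's cones for symmetric matrices (deleting a coordinate lowers the index by one; this needs $k\le n-1$, exactly the hypothesis) gives $\lambda(r(1))\in\Gamma_{k}$, hence $\sigma_j(\lambda(r(1)))>0$ and $\sigma_j(\lambda(r))>0$ for $1\le j\le k$; since $0<w^{-2}\le1$, dropping the nonnegative term in the identity yields $\sigma_j(\lambda(r,p))\ge\frac{1}{1+|p|^{2}}\sigma_j(\lambda(r))$, which is \eqref{BC2-18}. For the inclusion $\Gamma_{k+1}(0)\subset\Gamma_{k}(p)$ I would argue by homotopy: as $s$ runs over $[1,w]$, the identity (with $s$ in place of $w$) shows $\sigma_j(\lambda(\gamma^{(s)}r\gamma^{(s)}))>0$ for all $1\le j\le k$, where $\gamma^{(s)}=\mathrm{diag}(1/s,1,\ldots,1)$, so $s\mapsto\lambda(\gamma^{(s)}r\gamma^{(s)})$ is a path in $\{\sigma_k>0\}$ starting, at $s=1$, in $\Gamma_{k}$; since $\Gamma_{k}$ is a connected component of $\{\sigma_k>0\}$, the path stays in $\Gamma_{k}$, and its endpoint is $\lambda(r,p)$.

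The only ingredient beyond bookkeeping is the restriction property $\lambda(r)\in\Gamma_{k+1}\Rightarrow\lambda(r(1))\in\Gamma_{k}$, which is standard (it follows, for instance, from positivity of the first partials of $\sigma_{k+1}$ on $\Gamma_{k+1}$ together with the Newton-Maclaurin inequalities). The step I expect to need the most care is that the minor identity by itself proves the inequality but not the membership $\lambda(r,p)\in\Gamma_{k}$, which is why the homotopy argument is needed rather than merely reading off positivity of $\sigma_1,\ldots,\sigma_k$.
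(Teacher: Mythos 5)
Your proof is correct. The paper does not actually include a proof of Proposition \ref{prop-I} (it cites Ivochkina and Jiao--Wang), but the argument you give --- rotate so that $p$ lies along a coordinate axis, replace the non-symmetric matrix by its similar symmetric conjugate $\gamma r\gamma$, expand $\sigma_j$ as a sum of $j\times j$ principal minors and split according to whether the distinguished index appears to obtain the convex-combination identity
$\sigma_j(\lambda(r,p))=w^{-2}\sigma_j(\lambda(r))+(1-w^{-2})\sigma_j(\lambda(r(1)))$, then invoke the restriction property of G\aa rding cones --- is the standard route and is surely what the cited sources do in spirit.

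Two small remarks. First, given the paper's explicit definition $\Gamma_k=\{\kappa:\sigma_m(\kappa)>0,\ m=1,\dots,k\}$, your identity already yields $\sigma_j(\lambda(r,p))\geq w^{-2}\sigma_j(\lambda(r))>0$ for every $j\leq k$, so the membership $\lambda(r,p)\in\Gamma_k$ (hence $\Gamma_{k+1}(0)\subset\Gamma_k(p)$) is immediate and the homotopy argument is not needed; it would be the right device only if $\Gamma_k$ were taken a priori to be just the connected component of $\{\sigma_k>0\}$ containing $(1,\dots,1)$. Second, the one nontrivial ingredient you appeal to, namely that $\lambda(r)\in\Gamma_{k+1}$ forces $\sigma_j$ of the $(n-1)\times(n-1)$ principal submatrix to be nonnegative for $j\leq k$, is cleanest via the identity $\partial\sigma_{j+1}(\lambda(r))/\partial r_{11}=\sigma_j(\lambda(r(1)))$ together with the positive definiteness of the Newton tensor $\big(\partial\sigma_{j+1}(\lambda(r))/\partial r_{ab}\big)_{a,b}$ on $\{\lambda(r)\in\Gamma_{j+1}\}\supset\{\lambda(r)\in\Gamma_{k+1}\}$; this makes precise your reference to ``positivity of the first partials,'' and Newton--Maclaurin is not actually used in this step.
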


\section{$C^1$ estimates}

In this and the following sections, we assume $\psi > 0$ and establish the $C^2$ estimates independent
of $\inf \psi$.

In this section, we consider the $C^1$ estimates for the admissible solution of \eqref{js-2}.
Since $M_u$ is $(\eta, n)$-convex, we find the mean curvature of $M_u$, $H [M_u] > 0$.
It follows from the maximum principle that $u \leq 0$ in $\Omega$ since $u = 0$ on $\partial \Omega$.
By the maximum principle again we get $u \geq \ul u$ in $\Omega$. It follows that
\begin{equation}
\label{js-7}
\sup_{\ol \Omega} |u| + \sup_{\partial \Omega} |D u| \leq C,
\end{equation}
for some positive constant $C$ depending only on $\|\ul u\|_{C^{0,1} (\ol \Omega)}$.
Next we establish the global gradient estimates to prove
\begin{theorem}
\label{gradient}
Let $u \in C^3 (\Omega) \cap C^1 (\ol \Omega)$ be an admissible solution
of \eqref{js-2}. Suppose \eqref{add-3} and $\psi_z \geq 0$.
Then there exists a positive constant $C$ depending only on $n$, $\|u\|_{C^0 (\ol \Omega)}$ and
$\|\psi^{1/n}\|_{C^{1} (\overline{\Omega} \times [- \mu_0, \mu_0]\times \mathbb{S}^n)}$ but independent of $\inf \psi$ such that
\begin{equation}
\label{gradient-1}
\sup_{\ol \Omega} |Du| \leq C (1 + \sup_{\partial \Omega} |Du|),
\end{equation}
where $\mu_0 := \|u\|_{C^0 (\overline{\Omega})}$.
\end{theorem}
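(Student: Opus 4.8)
The plan is to apply the maximum principle to a carefully chosen auxiliary function of the form $w = |Du|^2 e^{\phi(u)}$ (or equivalently $\log |Du| + \phi(u)$) for a suitable increasing convex function $\phi$, and show that an interior maximum of $w$ forces an a priori bound, so that the supremum is controlled by the boundary values. First I would set up the computation on the hypersurface $M_u$: write the equation $F(a_{ij}) := K_\eta[M_u] = \psi$ with $F$ the operator from \eqref{js-5}, let $F^{ij} = \partial F/\partial a_{ij}$ denote the (positive-definite, by \eqref{cj-10}) linearized coefficients, and note that the tangential gradient $|\nabla u|^2 = |Du|^2/w^2$ is a monotone function of $|Du|$, so it suffices to bound $|\nabla u|$ or equivalently the normal component $\langle \nu, \epsilon_{n+1}\rangle = 1/w$ from below. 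Working with $v := \langle X, \epsilon_{n+1}\rangle = u$ and $\langle \nu, \epsilon_{n+1}\rangle$, the Gauss and Weingarten formulas \eqref{Gauss} give the standard identities $\nabla_i v = \langle e_i, \epsilon_{n+1}\rangle$, $\nabla_{ij} v = h_{ij}\langle \nu, \epsilon_{n+1}\rangle$, and $\nabla_{ij}\langle \nu, \epsilon_{n+1}\rangle = -h_{ik}h_{kj}\langle \nu,\epsilon_{n+1}\rangle - \nabla_k h_{ij}\langle e_k, \epsilon_{n+1}\rangle$, the last via Codazzi.

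Next I would compute $\mathcal{L} w$ where $\mathcal{L} = F^{ij}\nabla_{ij}$ is the linearized operator (acting now in the frame on $M_u$, after translating \eqref{matrix} into $h_{ij}$), at an interior maximum point $x_0$ of $w$. Differentiating the equation $F = \psi$ once in the direction $e_k$ produces $F^{ij}\nabla_k h_{ij} = \nabla_k \psi$, and here the $\nu$-dependence of $\psi$ contributes a term $\psi_\nu \cdot \nabla_k\nu = -h_{kl}\psi_{\nu_l}$ (schematically), which is the only genuinely new feature compared with the $\psi=\psi(x,u)$ case; it is linear in $h$ and hence absorbable. Using $\nabla w = 0$ at $x_0$ to replace third-derivative terms, and the concavity property \eqref{cj-9} (so that $F^{1/n}$ is concave, giving $F^{ij}\nabla_{pp}h_{ij}$-type second-derivative terms the right sign after the usual manipulation), the computation should yield an inequality of the shape
\[
0 \geq \mathcal{L} w \geq c\, |\nabla u|^2\, \Big(\phi''(u) - C\phi'(u) - C\Big)\sum_i F^{ii} \;-\; C\,\frac{|D\psi^{1/n}|}{\psi^{1/n}}\,|\nabla u|\,\big(\cdots\big),
\]
after choosing, say, $\phi(u) = -A u$ for $A$ large depending on $\mu_0$ and the $C^1$ norm of $\psi^{1/n}$; the key point is that $|\nabla_k \psi| / \psi$ enters only through $\nabla(\psi^{1/n})/\psi^{1/n} = \tfrac1n \nabla\psi/\psi$ when one factors the equation as $F^{1/n} = \psi^{1/n}$, so the bound depends only on $\|\psi^{1/n}\|_{C^1}$ and not on $\inf\psi$. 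Choosing $A$ large enough makes the bracket strictly positive unless $|\nabla u|(x_0)$ is already bounded, giving the desired estimate at the maximum and hence \eqref{gradient-1}.

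The main obstacle, and the reason for insisting on $\psi^{1/n} \in C^1$ rather than $\psi \in C^1$, is precisely controlling the gradient term $F^{ij}\nabla_k h_{ij}$ uniformly in $\inf\psi$: naively $\nabla_k\psi/\psi$ blows up where $\psi$ is small, and the remedy is to run the whole maximum-principle computation on $G := F^{1/n} = \psi^{1/n}$, whose linearization $G^{ij} = \tfrac1n F^{-1+1/n}F^{ij}$ is still positive, whose concavity is built in by \eqref{cj-9}, and for which the first-derivative identity reads $G^{ij}\nabla_k h_{ij} = \nabla_k(\psi^{1/n})$ with the right-hand side bounded by $\|\psi^{1/n}\|_{C^1}(1 + |h_{kl}|)$ — the $|h|$ being the Weingarten contribution from the $\nu$-dependence, absorbed as above. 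A secondary technical point is that $\sum_i G^{ii}$ need not be bounded below a priori; but since $M_u$ is $(\eta,n)$-convex one has the structural bound $\sum_i F^{ii} \geq c > 0$ from \eqref{cj-10}–\eqref{cj-6} (the trace of the linearization of an operator concave and vanishing on $\partial\Gamma$ is bounded below on compact subsets and, more usefully, one always has $F^{ij}h_{ij} = $ a multiple of $F$, letting one trade it off), so the coefficient of the good $\phi''$ term does not degenerate. Once the interior maximum is controlled, combining with \eqref{js-7} yields the theorem.
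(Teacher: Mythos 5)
Your strategy --- a maximum-principle argument with test function $\log w + \phi(u)$ --- is indeed the paper's framework (the paper takes $Q=\log w + Bu$, $B>0$), but your proposal misses the crucial mechanism that makes the argument close, and your sign choice would actually break it. At an interior maximum $x_0$, in a frame with $\nabla_1 u = |\nabla u|$, $\nabla_i u = 0$ for $i\geq 2$ and $h$ diagonal, the critical-point condition forces $wh_{11} = -\phi'(u)$ (and $h_{i1}=0$, $i\geq 2$), and contracting the second-order condition with $F^{ii}$ leaves, after discarding the manifestly nonnegative terms,
\[
0 \;\geq\; F^{11}\,\phi'(u)^2\,(\nabla_1 u)^2 \;-\; C\Big(1+\frac{|\phi'(u)|}{w}\Big)\psi^{1-1/n}.
\]
The good term carries the \emph{single} component $F^{11}$, not $\sum_i F^{ii}$. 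The paper's key observation, absent from your proposal, is that with $\phi' = B>0$ one has $\kappa_1 = h_{11} = -B/w<0$, so property \eqref{cj-6} yields $F^{11}\geq \delta_0\sum_i F^{ii} = \delta_0(n-1)\sigma_{n-1}(\eta) \geq \delta_1\psi^{1-1/n}$ by Newton--Maclaurin; the $\psi^{1-1/n}$ factors on both sides then cancel, which is precisely what removes the dependence on $\inf\psi$. Your choice $\phi(u)=-Au$ ($A>0$) instead gives $h_{11}=+A/w>0$ at $x_0$, so $\kappa_1>0$, \eqref{cj-6} is inapplicable, and the argument does not close (note also that this $\phi$ is decreasing, contrary to the ``increasing convex'' you announced).

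Your fallback --- that $\sum_i F^{ii}$, or $\sum_i G^{ii}$ with $G=F^{1/n}$, is bounded below --- does not repair the gap. The justification offered is circular: a lower bound for the trace on compact subsets of $\Gamma$ presupposes compactness that is not yet known, and Euler's relation $F^{ij}h_{ij}=n\psi\to 0$ gives nothing. (It does happen that $\sum_i G^{ii}\geq n-1$, by the same Maclaurin inequality applied to $\sigma_{n-1}(\eta)/\sigma_n(\eta)^{(n-1)/n}$, but $\sum_i F^{ii}=(n-1)\sigma_{n-1}(\eta)$ itself is \emph{not} bounded below as $\psi\to 0$.) More fundamentally, a lower bound on the trace does not bound $F^{11}$, which is the coefficient that actually appears; relating $F^{11}$ to $\sum_i F^{ii}$ is exactly what \eqref{cj-6} accomplishes, and only because $\kappa_1<0$. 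Finally, your remark that the $\nu$-dependence of $\psi$ is ``linear in $h$ and hence absorbable'' glosses over the real point: the $h$-factor that arises is specifically $h_{11}$, which at $x_0$ equals the \emph{a priori} bounded quantity $-B/w$ by the critical-point condition; a general factor of $|h|$ would not be controllable at this stage, since second derivatives have not yet been estimated.
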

\begin{proof}
Suppose $\epsilon_1, \ldots, \epsilon_{n+1}$ is a standard basis of $\mathbb{R}^{n+1}$. Let
\[
Q=\log w+B\langle X,\epsilon_{n+1}\rangle = \log w+Bu,
\]
where $w=\frac{1}{\langle\nu,\epsilon_{n+1}\rangle} = \sqrt{1 + |Du|^2}$ and $B$ is a positive constant
sufficiently large to be determined later.
Suppose the maximum value of $Q$ is achieved at an interior point $x_0 \in \Omega$.
We rotate $\epsilon_1, \ldots, \epsilon_n$ to satisfy, at $x_0$, $u_1 = |Du|$, $u_j = 0$ for $j \geq 2$.
Define $e_i = \gamma^{is} \tilde{\partial}_s$, where $\tilde{\partial}_s := \epsilon_s + u_s \epsilon_{n+1}$
for $1 \leq s \leq n$, $i = 1, \ldots, n$. It is clear that $\{e_1,e_2,\cdots,e_n\}$ is an orthonormal frame on $M_u$ and
at $X_0 = (x_0, u (x_0))$, satisfy
$$\nabla_1u=\frac{|Du|}{w}=|\nabla u|, \nabla_i u=u_i=0, \text{ for } i\geq 2.$$
We may further rotate $\epsilon_2, \ldots, \epsilon_n$ such that $\{u_{ij}\}_{i,j \geq 2}$ is
diagonal at $x_0$.
We have
$$\nabla_i w=\frac{1}{(\langle\nu,\epsilon_{n+1}\rangle)^2}h_{im}\nabla_m u.$$
Thus, at the maximum point $x_0$ of $Q$, we get
\begin{equation}
\label{4.2}
\nabla_i Q = wh_{i1}\nabla_1 u + B \nabla_i u=0.
\end{equation}
Taking $i=1$ and $i\geq 2$ respectively, we get, at $X_0$, $$wh_{11}=-B, h_{i1}=0.$$
Since at $X_0$,
\[
h_{11} = \frac{u_{11}}{w^3}, h_{1i} = \frac{u_{1i}}{w^2} \mbox{ and } h_{ij} = \frac{u_{ij}}{w}
  \mbox{ for } i, j \geq 2,
\]
the matrix $\{h_{ij}\}$ is diagonal at $X_0$.
At $X_0$, we have
\begin{equation}
\label{js-5}
\nabla_{ii} Q = w^2(h_{im} \nabla_m u)^2+w \nabla_i h_{i1} \nabla_1 u+wh_{im} \nabla_{mi} u + B\nabla_{ii} u \leq 0.
\end{equation}
Let $\{y_k\}_{k=1}^n$ denote the standard local coordinate system on $\mathbb{S}^n$ near $(0, \ldots, 0, 1)$.
Let
\[
F^{ij} = \frac{\partial f (\lambda (h))}{h_{ij}}.
\]
By Codazzi equations, Weingarten formula and differentiating the equation \eqref{js-2}, we have, at $X_0$,
\begin{equation}
\label{js-6}
\begin{aligned}
F^{ii} \nabla_i h_{i1} = \,& F^{ii} \nabla_1 h_{ii} = e_1 (\psi)\\
  = \,& \psi_{x_j} \nabla_1 x_j + \psi_u \nabla_1 u + \partial_{y_k} \psi \nabla_1 \nu_k\\
  = \,& \frac{\psi_{x_1}}{w} + \psi_u \nabla_1 u - \frac{\partial_{y_1} \psi h_{11}}{w}\\
  = \,& \frac{\psi_{x_1}}{w} + \psi_u \nabla_1 u + \frac{B \partial_{y_1} \psi}{w^2}.
\end{aligned}
\end{equation}
Using \eqref{add-3}, \eqref{js-5}, \eqref{js-6}, $\psi_u \geq 0$ and that $\nabla_{ij} u = \frac{h_{ij}}{w}$, we obtain, at $X_0$,
\begin{equation}\label{4.3}
\begin{aligned}
0 \geq \,& w^2 F^{11}h_{11}^2 (\nabla_1 u)^2 + w \nabla_1 u (e_1 \psi) + F^{ii}h_{ii}^2+ \frac{nB \psi}{w}\\
      \geq \,& F^{11}(\nabla_1 u)^2 B^2 + \psi_{x_1} \nabla_1 u + \frac{B \nabla_1 u \partial_{y_1} \psi}{w}\\
  \geq \,& B^2 F^{11}(\nabla_1 u)^2 - C \Big(1 + \frac{B}{w}\Big) \psi^{1-1/n}.
\end{aligned}
\end{equation}
By \eqref{cj-6} and that $\kappa_1$ is negative, we get
\[
F^{11}\geq \delta_0 \sum F^{ii} = \delta_0 (n-1) \sigma_{n-1} (\eta) \geq \delta_1 \psi^{1-1/n}
\]
for some positive constant $\delta_1$ depending only on $n$.
Therefore, from \eqref{4.3}, we have
\[
B^2 \delta_1 |\nabla u|^2 - C \Big(1 + \frac{B}{w}\Big) \leq 0.
\]
Since we can assume $w$ is sufficiently large, using $\frac{1}{w^2}+|\nabla u|^2=1$, we can assume $|\nabla u|^2\geq 1/2$. We obtain the desired estimate
by fixing $B$ sufficiently large.
\end{proof}
Combining \eqref{js-7} and \eqref{gradient-1}, we obtain the \emph{a priori} $C^1$ estimates.

\section{Global estimates for second order derivatives}
In this section, we deal with the global estimates for second order derivatives.
Since when $n=2$, \eqref{js-2} becomes classic two-dimensional Monge-Amp\`{e}re equation or a 2-Hessian equation and
the second order estimates follows by Proposition 2.3 in \cite{GRW15}, it suffices to consider the case when $n \geq 3$.
Therefore, in the following of this section, we assume $n \geq 3$.

\begin{theorem}
\label{Thm-second-interior}
Suppose \eqref{add-1} holds.
Let $u \in C^4 (\Omega) \cap C^2 (\overline{\Omega})$ be an admissible solution of \eqref{js-2}.
Then there exists a positive constant $C$ depending on $n$, $\|u\|_{C^1 (\overline{\Omega})}$
and $\|\psi^{1/(n-1)}\|_{C^{2} (\overline{\Omega} \times [- \mu_0, \mu_0]\times \mathbb{S}^n)}$ satisfying
\begin{equation}
\label{S-1}
\sup_{\ol \Omega} |D^2 u| \leq C \Big(1 + \sup_{\partial \Omega} |D^2 u|\Big),
\end{equation}
where $\mu_0 := \|u\|_{C^0 (\overline{\Omega})}$.
\end{theorem}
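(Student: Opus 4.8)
The plan is to run the standard global second-order estimate by applying the maximum principle to a test quantity of the form
\[
W = \log \lambda_{\max}(h) + \Phi(|\nabla u|^2) + B u,
\]
where $\lambda_{\max}(h)$ is the largest principal curvature of $M_u$, $\Phi$ is a suitable auxiliary one-variable function (e.g. $\Phi(s) = -\log(1 - s/2)$ or a similar convex profile adapted to the gradient bound), and $B$ is a large constant to be chosen. Since at the maximum point one may perturb so that $\lambda_{\max}(h) = h_{11}$ with $h_{ij}$ diagonal, the computation reduces to differentiating the equation $f(\lambda(h)) = \psi(x, u, \nu)$ twice in the $e_1$ direction and plugging into the linearized operator $\mathcal{L} = F^{ij}\nabla_{ij}$. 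The structural facts I would lean on are: positivity of $F^{ii}$ from \eqref{cj-10}; the sign condition \eqref{cj-6}, which gives $F^{11} \geq \delta_0 \sum_i F^{ii}$ whenever $\kappa_1 < 0$ (and an analogous control when $h_{11}$ is large forces some $\kappa_i$ negative); the Gauss equation and Ricci identity \eqref{Commutation formula} to commute derivatives of $h$; and the Codazzi equations to convert $\nabla_1 h_{ii}$ into $\nabla_i h_{i1}$.

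First I would reduce to $n \geq 3$ as the statement already does, then set up the frame exactly as in the proof of Theorem \ref{gradient}, so that $h$ is diagonal at $X_0$ and $\nabla_1 u = |\nabla u|$. Second, I would differentiate $f(\lambda(h)) = \psi$ once to get $F^{ii}\nabla_1 h_{ii} = e_1(\psi)$ and twice to get
\[
F^{ij}\nabla_{11}h_{ij} + F^{ij,kl}\nabla_1 h_{ij}\nabla_1 h_{kl} = \nabla_{11}(\psi).
\]
The right-hand side is where the assumption \eqref{add-1} rather than concavity of $f^{1/n}$ bites: the term $\nabla_{11}\psi$ contains $\psi_{\nu}\cdot \nabla_{11}\nu$, which via the Weingarten formula produces a factor $\nabla_1 h_{11}$ times $h$-terms and also $|h|^2$-type contributions, none of which are controlled by $\psi$ alone when $\inf\psi \to 0$. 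Third, I would handle this following \cite{JW21}: instead of using concavity of $\sigma_n^{1/n}$, rewrite $K_\eta(\kappa) = \sum_{i=2}^n \sigma_1^{n-i}\sigma_i$ and exploit the concavity of $(\sigma_k/\sigma_1)^{1/(k-1)}$ from \cite{GLL12} together with the Newton–Maclaurin inequalities \eqref{NM-1}, \eqref{NM-2} to absorb the bad third-derivative terms $F^{ij,kl}\nabla_1 h_{ij}\nabla_1 h_{kl}$ into the good term $F^{ii}(\nabla_1 h_{ii})^2/\lambda_{\max}$ coming from $\nabla_{11}\log h_{11}$. In parallel, to control the $\nu$-dependence of $\psi$ I would apply the idea of \cite{CJ20}, using that for the $(\eta,n)$-curvature the relevant combination involves $\sigma_{n-1}$ and the identities $\sum_i \sigma_{k-1;i}(\kappa)\kappa_i = k\sigma_k(\kappa)$, so that the troublesome first-derivative-of-$\nu$ terms can be re-expressed and then dominated.

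Fourth, I would collect terms: the Gauss-equation commutator contributes $+\,h_{11}^2 F^{ii}h_{ii}$ (favorable when $h_{11}$ is large and many $h_{ii}$ are positive) minus lower-order pieces; the $\Phi(|\nabla u|^2)$ term contributes a positive multiple of $F^{ii}h_{ii}^2$ after using $\nabla_i w = (\langle\nu,\epsilon_{n+1}\rangle)^{-2} h_{im}\nabla_m u$ as in \eqref{js-5}; and the $Bu$ term contributes $B F^{ii}\nabla_{ii}u = B F^{ii} h_{ii}/w \geq c B \psi^{(n-1)/n}/w$ after using the equation and \eqref{NM-2}-type bounds, which is what lets us swallow constants. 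Choosing $\Phi$ and then $B$ large, the differential inequality $0 \geq \mathcal{L}W$ forces $\lambda_{\max}(h)(X_0) \leq C$, hence $|D^2 u| \leq C(1 + \lambda_{\max})$ everywhere and the boundary term appears only through $\sup_{\partial\Omega}|D^2 u|$ when the maximum of $W$ is attained on $\partial\Omega$ instead. The main obstacle, as flagged in the introduction, is precisely step three: making the concavity substitute from \cite{GLL12} interact correctly with the expanded form \eqref{js-5} of $K_\eta$ and simultaneously taming the $\psi_\nu$ terms — the two difficulties are coupled because both feed into the sign of the coefficient of $(\nabla_1 h_{11})^2$, and getting that coefficient to have the right sign with a quantitatively usable constant is the crux.
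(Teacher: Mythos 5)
Your high-level ingredients — the Guan--Li--Li concavity lemma, the Chu--Jiao idea for the $\nu$-dependence, Newton--Maclaurin, and a non-degenerate absorption argument — are the right ones, but the test function you propose does not fit them together, and this is not a cosmetic point. You take $W = \log \lambda_{\max}(h) + \Phi(|\nabla u|^2) + Bu$ and plan to differentiate the equation in the $e_1$-direction, producing the ``bad'' term $F^{ij,kl}\nabla_1 h_{ij}\nabla_1 h_{kl}$ and a ``good'' term $F^{ii}(\nabla_1 h_{ii})^2/\lambda_{\max}$. The paper instead takes the mean curvature $H$ as the order-two part of the test function, $V = (v-a)^{-1}\exp\{\delta|X|^2/2\}\,H$ with $v = \langle\nu,\epsilon_{n+1}\rangle$, and after applying $F^{ii}\nabla_{ii}$ to $\log H$ the relevant second-order quantity is $\sum_s F^{ii}\nabla_{ss}h_{ii}$, i.e.\ the equation is differentiated twice and summed over all directions $s$, not just $e_1$. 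This is what makes Lemma~\ref{lem-S-2} applicable: that lemma (coming from Lemma~3.2 of \cite{GLL12}) bounds $\sum_s \sigma_n^{ij,pq}\nabla_s\eta_{ij}\nabla_s\eta_{pq}$ by
\[
(1-\alpha)\frac{|\nabla\psi|^2}{\psi} + 2\alpha\frac{\langle\nabla\psi,\nabla H\rangle}{H} - (1+\alpha)\frac{\psi|\nabla H|^2}{H^2},\qquad \alpha = \tfrac{1}{n-1},
\]
and the negative $|\nabla H|^2/H^2$ on the right is precisely what cancels the $+F^{ii}(\nabla_iH)^2/H^2$ coming from $\nabla_{ii}\log H$. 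With $\log\lambda_{\max}$ in place of $\log H$ you do not have $\nabla H$ floating around, so the lemma does not plug in; you would need a different concavity inequality adapted to $\lambda_{\max}$, and the paper does not supply one (nor does \cite{GLL12}). In short, the crux you correctly identify — getting the coefficient of the squared third-derivative term to have the right sign with a quantitatively usable constant — is resolved in the paper exactly because $H$, not $\lambda_{\max}$, is the quantity being logged, and your proposal leaves this unresolved.

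A second, smaller gap: you do not mention the dichotomy that closes the argument. After the differential inequality reduces to something of the form $0 \geq \delta\sum F^{ii} + \delta_0 f_i\kappa_i^2 - CH\psi^{1-\alpha}$, the paper splits into Case~1 ($|h_{ii}| \leq \epsilon_0 h_{11}$ for all $i\geq 2$) and Case~2 ($h_{22} > \epsilon_0 h_{11}$ or $h_{nn} < -\epsilon_0 h_{11}$). In Case~1 one shows $\sum F^{ii} = (n-1)\sigma_{n-1}(\eta) \gtrsim H^2$, and in Case~2 one shows $f_i\kappa_i^2 \gtrsim H^{2+\alpha}\psi^{1-\alpha}$ via Newton--Maclaurin. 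Both branches use $\sigma_{n-1}(\eta)$ in an essential way, i.e.\ the specific $(\eta,n)$-structure of the operator. Your sketch gestures at this (``the relevant combination involves $\sigma_{n-1}$'') but a proof needs this case analysis to actually extract the bound on $H$, and without it the final inequality does not close.
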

By Lemma 3.2 of \cite{GLL12}, we have
the following lemma.
\begin{lemma}
\label{lem-S-2}
Let $\alpha = \frac{1}{n-1}$. Then we have
\begin{equation}
\label{GLL-1}
\sum_s \sigma_n^{ij, pq} \nabla_s \eta_{ij} \nabla_s \eta_{pq} \leq (1 - \alpha) \frac{|\nabla \psi|^2}{\psi}
  + 2 \alpha \frac{\langle\nabla \psi, \nabla H\rangle}{H} - (1 + \alpha) \frac{\psi |\nabla H|^2}{H^2},
\end{equation}
where
\[
\sigma^{ij, pq}_n = \frac{\partial^2 \sigma_n (\lambda(\eta))}{\partial \eta_{ij} \partial \eta_{pq}},
\]
$\nabla \psi$ denotes the covariant gradient of $\psi$ when $\psi = \psi (X, \nu (X))$ is only regarded as
the function of $X \in M_u$ and $H = H (X)$ is the mean curvature of $M_u$ at $X \in M_u$.
\end{lemma}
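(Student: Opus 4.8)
\emph{Proof strategy.} The plan is to deduce \eqref{GLL-1} from the concavity of $(\sigma_n/\sigma_1)^{1/(n-1)}$ on the positive cone — proved by Guan--Li--Li (Lemma 3.2 of \cite{GLL12}) — after translating between $\sigma_1(\lambda(\eta))$, $\sigma_n(\lambda(\eta))$ and the geometric quantities $H$, $\psi$ via the identities
\[
\sigma_1(\lambda(\eta)) = \mathrm{tr}_g\,\eta = (n-1)H, \qquad \sigma_n(\lambda(\eta)) = \det(g^{-1}\eta) = \psi,
\]
the second holding along the solution $M_u$ of \eqref{js-2}.

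First I would fix $X \in M_u$ and compute in a local orthonormal frame, so that $\sigma_1^{ij} = \delta_{ij}$ and $\sigma_1^{ij,pq} = 0$. Put $m := n-1$ and $G := \sigma_n/\sigma_1$, regarded as functions of the symmetric matrix $\eta$, so that $\sigma_n = \sigma_1\,G$. Differentiating twice in $\eta$ gives
\[
\sigma_n^{ij,pq} = \delta_{ij}\,G^{pq} + \delta_{pq}\,G^{ij} + \sigma_1\,G^{ij,pq},
\]
and contracting with $\nabla_s\eta_{ij}\nabla_s\eta_{pq}$, summing over $s$, and using $\delta_{ij}\nabla_s\eta_{ij} = \nabla_s(\mathrm{tr}\,\eta)$ together with $G^{pq}\nabla_s\eta_{pq} = \nabla_s G$ yields
\[
\sum_s \sigma_n^{ij,pq}\nabla_s\eta_{ij}\nabla_s\eta_{pq}
 = 2\sum_s \nabla_s(\mathrm{tr}\,\eta)\,\nabla_s G + \sigma_1\sum_s G^{ij,pq}\nabla_s\eta_{ij}\nabla_s\eta_{pq}.
\]
Since $M_u$ is $(\eta,n)$-convex, $\eta$ is positive definite, hence $\lambda(\eta) \in \Gamma_n$ and the concavity of $G^{1/m}$ applies, giving for each $s$
\[
G^{ij,pq}\nabla_s\eta_{ij}\nabla_s\eta_{pq} \le \frac{m-1}{m}\,\frac{(\nabla_s G)^2}{G}.
\]

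It then remains to substitute $\sigma_1 = \mathrm{tr}\,\eta = (n-1)H = mH$ and $G = \psi/(mH)$, so that
\[
\nabla_s G = \frac{1}{mH}\Big(\nabla_s\psi - \frac{\psi}{H}\,\nabla_s H\Big),
\]
where $\nabla_s\psi$ is, as in the statement, the covariant derivative along $M_u$ of $X \mapsto \psi(X,\nu(X))$, equivalently $\sigma_n^{ij}(\eta)\nabla_s\eta_{ij}$. Plugging these in, expanding $\big(\nabla_s\psi - \tfrac{\psi}{H}\nabla_s H\big)^2$, summing over $s$ and writing $\alpha = 1/m = 1/(n-1)$, one collects the three types of terms and checks that the coefficient of $|\nabla\psi|^2/\psi$ is $1-\alpha$, that of $\langle\nabla\psi,\nabla H\rangle/H$ is $2\alpha$, and that of $\psi|\nabla H|^2/H^2$ is $-(1+\alpha)$, which is exactly \eqref{GLL-1}. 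Everything past the invocation of \cite{GLL12} is elementary algebra; the only substantive ingredient is the concavity of $(\sigma_n/\sigma_1)^{1/(n-1)}$, and the sole point requiring care is the identification of $\nabla\psi$ on the right-hand side with $\sigma_n^{ij}\nabla_s\eta_{ij}$, together with keeping track of the factors $(n-1)$ relating $\sigma_1(\eta)$ to $H$.
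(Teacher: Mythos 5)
Your derivation is correct and the algebra checks out: writing $\sigma_n=\sigma_1 G$ with $G=\sigma_n/\sigma_1$, using the concavity of $G^{1/(n-1)}$ on $\Gamma_n$ to bound $G^{ij,pq}\nabla_s\eta_{ij}\nabla_s\eta_{pq}$, and then substituting $\sigma_1(\lambda(\eta))=(n-1)H$, $\sigma_n(\lambda(\eta))=\psi$ produces exactly the stated coefficients $1-\alpha$, $2\alpha$, $-(1+\alpha)$. The paper proves the lemma by a bare citation to Lemma~3.2 of \cite{GLL12}, which is essentially the inequality you establish (phrased in $\sigma_1,\sigma_n$ rather than $H,\psi$) and whose proof rests on the same concavity fact, so your blind argument is the intended one, simply spelled out.
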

\begin{proof}[Proof of Theorem \ref{Thm-second-interior}]
Let $v := 1/w = \langle\nu,\epsilon_{n+1}\rangle$.
There exists a positive constant $a$ depending only on $\|Du\|_{C^0 (\overline{\Omega})}$ such that $v \geq 2a$.
As in \cite{JW21}, we consider the test function
\[
V (X, \xi) := (v - a)^{-1} \exp\left\{\frac{\delta}{2} |X|^2\right\} H,
\]
where $X \in M_u$, $\xi \in T_X M_u$ is a unit vector and $\delta$ is a positive constant to be determined later.
Suppose that the maximum value of $V$ is achieved at a point $X_0 = (x_0, u(x_0)) \in M$, $x_0 \in \Omega$ and $\xi_0 \in T_{X_0} M_u$.
We choose a local orthonormal frame $\{e_{1},e_{2},\cdots,e_{n}\}$ near $X_{0}$ such that $\xi = e_1 (X_0)$,
\[
h_{ij} = \delta_{ij}h_{ii} \ \ \text{and} \ \
h_{11} \geq h_{22} \geq \cdots \geq h_{nn} \ \   \text{at $X_{0}$}.
\]
Therefore, at $X_0$, taking the covariant derivatives twice with respect to
\[
\log H + \log (v-a)^{-1} + \frac{\delta}{2} |X|^2,
\]
we have
\begin{equation}
\label{S-3}
 - \frac{\nabla_i v}{v-a} + \delta \langle X, e_i \rangle + \frac{\nabla_i H}{H} = 0,
  i = 1, \ldots, n
\end{equation}
and
\begin{equation}
\label{S-4}
0 \geq F^{ii} \Big\{
   - \frac{\nabla_{ii} v}{v- a} + \frac{(\nabla_i v)^2}{(v - a)^2}
   + \delta \big(1 + \kappa_i\langle X, \nu\rangle\big) + \frac{\nabla_{ii} H}{H} - \frac{(\nabla_i H)^2}{H^2}\Big\},
\end{equation}
where $\kappa_1, \ldots, \kappa_n$ are principal curvatures of $M_u$ at $X_0$ and
\[F^{ij}=\frac{\partial f (\lambda(h))}{\partial h_{ij}}.\]
Let
\[
\hat{F}^{ij} := \frac{\partial \sigma_n (\lambda (\eta))}{\partial \eta_{ij}} \mbox{ and } f_i = \frac{\partial f (\kappa)}{\partial \kappa_i}.
\]
It is easy to see
\[
F^{ij} = \sum_{s=1}^n \hat{F}^{ss} \delta_{ij} - \hat{F}^{ij} \mbox{ at } X_0.
\]
By the Weingarten equation, we have
\begin{equation}
\label{S-12}
\nabla_i v = - h_{im} \langle e_m, \epsilon_{n+1}\rangle
   = - h_{im} \nabla_m u \text{ and }
F^{ii} (\nabla_i v)^2 \leq C f_i \kappa_i^2.
\end{equation}
Next, by Gauss formula, the equation \eqref{js-2} and \eqref{add-1}, we have
\begin{equation}
\label{S-8}
\begin{aligned}
F^{ii} \nabla_{ii} v = \,& - F^{ii} \nabla_m h_{ii} \nabla_m u - f_i \kappa_{i}^2 v\\
  = \,& - v f_i \kappa_i^2 - \langle\nabla \psi, \nabla u\rangle
  \leq - v f_i \kappa_i^2 + C H \psi^{1-1/(n-1)}.
\end{aligned}
\end{equation}
where the positive constant $C$ depends on $\|\psi^{1/(n-1)}\|_{C^1}$ and $\|u\|_{C^1(\overline{\Omega})}$.
By \eqref{Commutation formula} and differentiating the equation \eqref{js-2} twice, we find
\begin{equation}
\label{S-9}
\begin{aligned}
F^{ii} \nabla_{ii} H = \,&
 \sum_{s=1}^n F^{ii} \nabla_{ss} h_{ii} - H f_i \kappa_i^2 + n \psi \sum_{s=1}^n h_{ss}^2 \\
  \geq \,& \sum_{s=1}^n\nabla_{ss} \psi - \sum_{s=1}^n\sigma_n^{ij, pq} \nabla_s \eta_{ij} \nabla_s \eta_{pq}
     - H f_i \kappa_i^2 + \psi H^2.
\end{aligned}
\end{equation}
Let $\alpha := \frac{1}{n-1}$.
Using \eqref{add-1} and Codazzi equation, we have
\begin{equation}
\label{add-4}
\begin{aligned}
\sum_{s=1}^n\nabla_{ss} \psi \geq \,& (1-\alpha)\frac{|\nabla \psi|^2}{\psi} - C (|\nabla X|^2
  + |\nabla \nu|^2) \psi^{1-\alpha}\\
  & + H \psi_{X_k} \nu_k - \sum_{s=1}^n \nabla_s h_{sj} (d_\nu \psi)(e_j)\\
  \geq \,& (1-\alpha)\frac{|\nabla \psi|^2}{\psi} - C H^2 \psi^{1-\alpha} - \nabla_j H (d_\nu \psi)(e_j).
\end{aligned}
\end{equation}
By \eqref{add-1}, \eqref{S-3} and \eqref{S-12}, we get
\begin{equation}
\label{S-10}
\begin{aligned}
\frac{\nabla_j H (d_\nu \psi)(e_j)}{H} = \left(\frac{\nabla_i v}{v-a} - \delta \langle X, e_i \rangle\right) (d_\nu \psi)(e_j)
   \leq C (H + \delta) \psi^{1-\alpha}.
\end{aligned}
\end{equation}
Next, by \eqref{S-3} and Cauchy-Schwarz inequality, we have, for any $\epsilon > 0$,
\begin{equation}
\label{S-13}
\frac{1}{H^2} F^{ii} (\nabla_i H)^2
   \leq (1+\epsilon) \frac{F^{ii} (\nabla_i v)^2}{(v - a)^2}
     + C (1 + 1/\epsilon) \delta^2 \sum F^{ii}.
\end{equation}
Combining \eqref{GLL-1}-\eqref{S-13}, we obtain
\begin{equation}
\label{S-14}
\begin{aligned}
0 \geq \,& \frac{a}{v-a} f_i \kappa_i^2 - \epsilon \frac{F^{ii} (\nabla_i v)^2}{(v - a)^2}
       - C H \psi^{1-\alpha} - 2 \alpha \frac{\langle \nabla \psi, \nabla H \rangle}{H^2}\\
  & + \left(\delta - C (1 + 1/\epsilon) \delta^2\right) \sum F^{ii}\\
  \geq \,& \left(\frac{a}{v-a} - \frac{C \epsilon}{(v-a)^2}\right) f_i \kappa_i^2
    + \left(\delta - C (1 + 1/\epsilon) \delta^2\right) \sum F^{ii} - C H \psi^{1-\alpha}
\end{aligned}
\end{equation}
provided $H$ is sufficiently large. Taking sufficiently small $\epsilon$ to satisfy $C \epsilon < a (v - a)/2$ and
$\delta$ sufficiently small such that $ C (1 + 1/\epsilon) \delta^2 < \delta/2$ in \eqref{S-14},
we get
\begin{equation}
\label{S-2}
\begin{aligned}
0 \geq \,& \frac{\delta}{2} \sum F^{ii} + \frac{a}{2 (v-a)} f_i \kappa_i^2  - C H\psi^{1-\alpha}\\
 \geq \,& \frac{\delta}{2} \sum F^{ii} + \delta_0 f_i \kappa_i^2  - C H\psi^{1-\alpha},
\end{aligned}
\end{equation}
where $\delta_0 := \frac{a}{2 (\max_{\ol \Omega} v - a)} > 0$.

We consider two cases. The positive constant $\epsilon_0$ will be determined later.

\bigskip

\noindent
{\bf Case 1.} \ $|h_{ii}| \leq \epsilon_0 h_{11}$ for all $i\geq2$.

\bigskip

In this case, we have
\[
[1-(n-2) \epsilon_0]h_{11} \leq \eta_{22}
\leq \cdots \leq \eta_{nn} \leq [1+(n-2) \epsilon_0]h_{11}.
\]
It then follows that
\[
\sigma_{n-1}(\eta) \geq \eta_{22} \cdots \eta_{nn}
\geq (1- (n-1) \epsilon_0)^{n-1}h_{11}^{n-1}.
\]
Choosing $\epsilon_0$ sufficiently small and using $n\geq 3$,
\begin{equation}\label{Case 1 eqn 1}
\sigma_{n-1}(\eta) \geq \frac{h_{11}^{n-1}}{2} \geq \frac{h_{11}^2}{2} \geq \frac{H^2}{2n^2}.
\end{equation}
We obtain, at $X_0$,
\begin{equation}\label{Case 1 eqn 2}
\sum_{i}F^{ii} = (n-1)\sum_{i}\hat{F}^{ii}
= (n-1) \sigma_{n-1}(\eta) \geq \delta_1 H^2
\end{equation}
for some positive constant $\delta_1 = \frac{n-1}{2n^2}$.
Thus, by \eqref{S-2} and \eqref{Case 1 eqn 2}, we have
\[
H \leq \frac{2C}{\delta \delta_1} (\sup \psi)^{1-\alpha}
\]
and \eqref{S-1} is proved.

\bigskip

\noindent
{\bf Case 2.} \ $h_{22}>\epsilon_0 h_{11}$ or $h_{nn}<-\epsilon_0 h_{11}$ .

\bigskip

By the definitions of $F^{ii}$ and $\hat{F}^{ii}$ and \eqref{NM-1},
\begin{equation}\label{Case 2 eqn 1}
F^{22} = \sum_{i\neq 2}\hat{F}^{ii} \geq \hat{F}^{11} \geq \frac{1}{n}\sum_{i}\hat{F}^{ii} =
 \frac{1}{n}\sigma_{n-1}(\eta) \geq \frac{c_0}{n} \sigma_1^{\alpha} (\eta) \sigma_n^{1-\alpha} (\eta)
     = \delta_2 H^\alpha \psi^{1-\alpha},
\end{equation}
where $\delta_2 = \frac{c_0(n-1)^\alpha}{n}$. Similarly, we have
\[
F^{nn} \geq \delta_2 H^\alpha \psi^{1-\alpha}.
\]
In this case, we have
\[
f_i \kappa_i^2 = F^{ii}h_{ii}^{2}
\geq F^{22}h_{22}^{2}+F^{nn}h_{nn}^{2}
\geq \epsilon_0^{2} \delta_2 h_{11}^{2} \geq \frac{\epsilon_0^{2} \delta_2}{n^2} H^{2+\alpha} \psi^{1-\alpha}.
\]
Thus, by \eqref{S-2}, we obtain
\[
H \leq \left(\frac{Cn^2}{\delta_0 \delta_2 \epsilon_0^2}\right)^{1/(1+\alpha)}
\]
and Theorem \ref{Thm-second-interior} follows immediately.
\end{proof}

\section{Boundary estimates for second order derivatives}
In this section, we establish the boundary estimates of second order derivatives.
\begin{theorem}
\label{thm-boundary}
Suppose $\Omega$ is a bounded domain in $\mathbb{R}^n$ with smooth strictly convex boundary $\partial \Omega$.
Assume that \eqref{add-8} and \eqref{add-1} hold.
Let $u \in C^3 (\overline{\Omega})$ be an admissible solution of \eqref{js-2}. Then there
exists a positive constant $C$ depending only on $\|u\|_{C^1 (\overline{\Omega})}$,
$\|\psi^{1/(n-1)}\|_{C^{1} (\overline{\Omega} \times [-\mu_0, \mu_0] \times \mathbb{S}^n)}$ and $\partial \Omega$
satisfying
\begin{equation}
\label{B2-0}
\max_{\partial \Omega} |D^2 u| \leq C,
\end{equation}
where $\mu_0 := \|u\|_{C^0 (\overline{\Omega})}$.
\end{theorem}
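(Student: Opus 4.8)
The plan is to follow the by-now standard three-part scheme for second-order boundary estimates of prescribed curvature equations (as in \cite{CNSV, ILT96, JW21}), splitting $D^2u$ on $\partial\Omega$ into tangential-tangential, tangential-normal, and normal-normal components. Fix $x_0\in\partial\Omega$ and choose coordinates so that $x_0=0$, the inner normal to $\partial\Omega$ at $0$ is the $e_n$-direction, and near $0$ the boundary is the graph $x_n=\rho(x')$ with $D\rho(0)=0$. Since $u=0$ on $\partial\Omega$ and $\ul u=0$ on $\partial\Omega$ with $u\ge\ul u$, the difference $u-\ul u$ vanishes on $\partial\Omega$ and is $\le 0$ near it, which is what makes $\ul u$ usable for barriers even though, as the introduction stresses, it is not strong enough to control normal-normal derivatives directly.

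\textbf{Step 1: tangential-tangential derivatives.} For $\beta<n$, differentiate the relation $u(x',\rho(x'))=0$ twice in tangential directions to get $u_{\alpha\beta}(0)=-u_n(0)\,\rho_{\alpha\beta}(0)$ for $\alpha,\beta<n$; thus $|u_{\alpha\beta}(0)|\le C(\|u\|_{C^1},\partial\Omega)$ for all pairs of tangential indices. This step uses only $u=0$ on $\partial\Omega$ and the $C^1$ bound already established in Section~3.

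\textbf{Step 2: tangential-normal derivatives.} This is the heart of the matter and where Lemma~\ref{BC2-lem1} (the computation of the linearized operator acting on tangential derivatives) and the uniform $2$-convexity of $\Omega$ enter. For a fixed tangential direction $\alpha<n$, apply the tangential derivative operator $T_\alpha=\partial_\alpha+\sum_{\beta<n}\rho_{\alpha\beta}x_\beta\,\partial_n$ (which is tangent to $\partial\Omega$) to the equation \eqref{js-2}. Because $\psi$ depends on $\nu$, differentiating produces the troublesome $d_\nu\psi$ terms; the idea borrowed from \cite{CJ20}, using the structure of $\sigma_{n-1}$ via \eqref{js-5} and \eqref{ILT}, is to absorb these. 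One then builds a barrier of the form $\Psi=A_1 v+A_2|x-x_0|^2 - A_3\sum_{\beta<n}|T_\beta(u-\ul u)|^2 + a(u-\ul u)$, where $v$ is a defining-type function for which $\mathcal{L}v$ is suitably negative — here uniform $2$-convexity guarantees, via Proposition~\ref{prop-I} (which gives $\Gamma_{2}(0)\subset\Gamma_1(p)$ and the lower bound \eqref{BC2-18} for $\sigma_j$, $j\le 1$), that the linearized operator evaluated on the distance-to-$\partial\Omega$ function is controlled from below by $c\sum F^{ii}$ with $c>0$, \emph{independently of $\inf\psi$}. Choosing $A_1\gg A_2\gg A_3\gg a\gg 1$ in the right order, $\mathcal{L}\Psi\le 0$ near $x_0$ and $\Psi\ge \pm T_\alpha(u-\ul u)$ on the parabolic-type boundary of a half-ball, so the maximum principle gives $|T_\alpha(u-\ul u)_n(0)|\le C$, hence $|u_{\alpha n}(0)|\le C$. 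The main obstacle, as the authors themselves flag, is precisely controlling the $\nu$-dependence here; Lemma~\ref{BC2-lem1} is designed to make the bad terms have the right sign or be dominated by $\sum F^{ii}$.

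\textbf{Step 3: normal-normal derivative.} Knowing all $u_{\alpha\beta}(0)$ with $\alpha+\beta<2n$ bounded, we must bound $u_{nn}(0)$ from above (the lower bound, i.e. $u_{nn}\ge -C$, follows from admissibility: $\lambda(A[u])\in\Gamma$ forces $\sigma_1>0$, hence a lower bound on the trace and thus on $u_{nn}$ once the other entries are controlled). For the upper bound one argues by contradiction: if $u_{nn}(0)\to\infty$, then by \eqref{js-5} the leading term $c\,w^{-n}\,S_{1;n}(D^2u,Du)\,u_{nn}^{n-1}=\psi-\sum_i P_iu_{nn}^i-P_0$ with the $P_i$ bounded, so $S_{1;n}(D^2u,Du)\to 0$; this means the restricted curvature operator degenerates along the boundary. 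Here is where the hypothesis \eqref{add-8} — the existence of $\ul\psi\ge 0$ with $\ul\psi\not\equiv 0$ on $\ol\Omega\times[-\epsilon,0]$ and $\psi\ge\ul\psi$ — is used: it prevents $\psi$ from being forced to vanish near the boundary merely because of the $\nu$-dependence, giving a lower bound $\psi(x_0,0,\nu)\ge\ul\psi(x_0,0)$ along the relevant slice that is incompatible with $S_{1;n}\to 0$ when combined with the bounds on the other second derivatives. This yields $u_{nn}(0)\le C$ and completes the proof. The delicate point in Step~3 is that all these estimates must be uniform in $\inf\psi$, so one cannot divide by $\psi$; instead one works with $\psi^{1/(n-1)}$ and the inequality \eqref{add-2}, exactly as in \cite{JW21, GTW99}.
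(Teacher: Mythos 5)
Your three-part decomposition (tangential-tangential, tangential-normal, normal-normal) matches the paper's structure, and Step~1 is correct. Steps~2 and~3, however, contain genuine gaps.

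\textbf{Step 2.} Your barrier uses $u-\ul u$ (both in the linear term $a(u-\ul u)$ and the quadratic term $-A_3\sum_\beta |T_\beta(u-\ul u)|^2$), as one would do for Hessian equations. The paper explicitly warns against this: for curvature equations the principal curvatures are eigenvalues of the matrix $A[u]$ in \eqref{matrix}, and the linearization $L$ does not act on $u-\ul u$ with the convenient sign one exploits in the Hessian case; this is precisely why the authors rely on the uniform $2$-convexity of $\partial\Omega$ and build their barrier from a defining-type function $v$ (as in \eqref{BC2-6}) and the distance $d$, not from the subsolution. More importantly, you write ``choosing $A_1\gg A_2\gg A_3\gg a\gg 1$ in the right order, $\mathcal L\Psi\le 0$,'' but this cannot be made uniform in $\inf\psi$: the concavity device $\frac{1}{n}\psi^{1/n-1}G^{ij}(D^2v-\eta_0 I + NDd\otimes Dd)_{ij}\ge \mu(N)$ yields $G^{ij}\Psi_{ij}\gtrsim \psi^{1-1/n}\mu(N)$, which degenerates as $\psi\to 0$. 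The paper's proof therefore splits at the interior maximum into Case~(i) ($\psi\ge\epsilon_0$, concavity argument works), Case~(ii-a) ($\psi<\epsilon_0$ and the mean curvature $H$ bounded, use $D\Phi=0$ and the boundedness of $\tilde a_{ij}$ to get a contradiction), and Case~(ii-b) ($\psi<\epsilon_0$ and $H$ large, use Newton--Maclaurin to force $\sum G^{ii}$ large). This trichotomy is the mechanism that removes the dependence on $\inf\psi$; your proposal does not address it. (Also a minor slip: you invoke $\Gamma_2(0)\subset\Gamma_1(p)$ from Proposition~\ref{prop-I}, but what is actually needed is $\Gamma_3(0)\subset\Gamma_2(p)\subset\Gamma$, coming from the $3$-convexity of $v$ in \eqref{BC2-4} for $n\ge 3$.)

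\textbf{Step 3.} Your contradiction argument does not work as stated. You argue that if $u_{nn}(0)\to\infty$ then $S_{1;n}(D^2u,Du)(0)\to 0$, and that this contradicts $\psi(x_0,0,\nu)\ge \ul\psi(x_0,0)$. But \eqref{add-8} only asserts $\ul\psi\not\equiv 0$ on $\ol\Omega\times[-\epsilon,0]$; it does \emph{not} give $\ul\psi(x_0,0)>0$ at the boundary point, where $u(x_0)=0$. So no contradiction follows pointwise. The paper's actual argument is: from $\sigma_1(\kappa[M_u])\ge c_0\ul\psi^{1/n}$ (via \eqref{NM-2}), compare $u$ with the solution $\ol u$ of a prescribed mean curvature equation $\sigma_1(\kappa[M_{\ol u}])=\epsilon_2\overline{\psi}$ where $\overline{\psi}$ is a bump function supported where $\ul\psi>0$; by Hopf's lemma $\ol u_\mu\le-\gamma_1<0$ uniformly on $\partial\Omega$, and the comparison principle gives $u\le\ol u$, hence $u_n(0)\le-\gamma_1$. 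Only then is $S_{1;n}(D^2u,Du)(0)=-u_n(0)\sigma_1(\kappa^b)\ge\gamma_2>0$ uniformly, and \eqref{js-5} bounds $u_{nn}(0)$ from above. The crucial point you are missing is that the positivity of $S_{1;n}$ at the boundary must come from a strict sign on $u_n$, obtained through this nonlocal comparison argument, not from local positivity of $\ul\psi$ there.
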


To prove \eqref{B2-0}, we consider
an arbitrary point $x_0 \in \partial \Omega$. Without loss of generality, we may assume that $x_0$ is the origin and that the
positive $x_n$-axis is in the interior normal direction to $\partial \Omega$ at the origin.
Suppose near the origin, the boundary $\partial \Omega$ is given by
\begin{equation}
\label{BC2-1}
x_n = \rho (x') = \frac{1}{2} \sum_{\alpha < n} \kappa^b_\alpha x_\alpha^2 + O (|x'|^3),
\end{equation}
where $\kappa^b_1, \ldots, \kappa^b_{n-1}$ are the principal curvatures of $\partial \Omega$ at the origin
and $x' = (x_1, \ldots, x_{n-1})$. Differentiating the
boundary condition $u = 0$ on $\partial \Omega$ twice, we can find a constant $C$ depending on
$\|u\|_{C^1 (\ol \Omega)}$ satisfying
\begin{equation}
\label{BC2-2}
|u_{\alpha \beta} (0)| \leq C \mbox{  for } 1\leq \alpha, \beta \leq n - 1.
\end{equation}
Since $\nu = \frac{(-Du, 1)}{w}$, $\psi$ can be regarded as a function of $x$, $u$ and $Du$. In the following, we denote
\[
\psi (x, u, D u) = \psi (X, \nu(X)) = \psi \left(x, u, \frac{(-Du, 1)}{w}\right)
\]
and the equation \eqref{js-2} can be written as
\begin{equation}
\label{1-1-1}
G (D^2 u, Du) := f (\lambda (A[u])) = \psi (x, u, Du),
\end{equation}
where $G = G (r, p)$ is viewed as a function of $(r, p)$ for $r \in S^{n \times n}$ and $p \in \mathbb{R}^n$.
Define
\begin{equation}
\label{BC2-25}
G^{ij} = \frac{\partial G}{\partial r_{ij}} (D^2 u, D u),\ \ G^{i} = \frac{\partial G}{\partial p_i} (D^2 u, D u),
  \ \ \psi_{u_i} = \frac{\partial \psi}{\partial u_i} (x, u, Du)
\end{equation}
and the linearized operator by
\[
L = G^{ij} \partial_{ij} - \psi_{u_i} \partial_i.
\]
The following lemma was proved in \cite{GS04}.
\begin{lemma} We have
\label{lemGS2}
\begin{equation}
\label{GS-2}
G^s = - \frac{u_s}{w} \sum_i f_i \kappa_i - \frac{2}{w (1+w)} \sum_{t,j}F^{ij} a_{it} \big(w u_t \gamma^{sj}
   + u_j \gamma^{ts}\big),
\end{equation}
where $a_{ij} = \frac{1}{w}\gamma^{ik}u_{kl}\gamma^{lj}$, $\kappa = \lambda (\{a_{ij}\})$, $f_i = \frac{\partial f (\kappa)}{\kappa_i}$ and
\[
F^{ij} = \frac{\partial f (\lambda (A[u]))}{\partial a_{ij}}.
\]
\end{lemma}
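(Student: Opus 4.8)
\textbf{Proof proposal for Lemma \ref{lemGS2}.}
The plan is to compute $G^s = \partial G/\partial p_s$ directly from the representation $G(r,p) = f(\lambda(A[u]))$, where $A[u] = (a_{ij})$ with $a_{ij} = \frac{1}{w}\gamma^{ik} u_{kl} \gamma^{lj}$, $\gamma^{ik} = \delta_{ik} - \frac{u_i u_k}{w(1+w)}$ and $w = \sqrt{1+|p|^2}$. By the chain rule, $G^s = F^{ij} \frac{\partial a_{ij}}{\partial p_s}$, where $F^{ij} = \partial f(\lambda(A[u]))/\partial a_{ij}$ and the $u_{kl}$ entries are held fixed (only the $p$-dependence through $w$ and $\gamma$ matters). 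So the whole computation reduces to differentiating $a_{ij}$ with respect to $p_s = u_s$.

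The key sub-steps: first, record $\frac{\partial w}{\partial u_s} = \frac{u_s}{w}$, hence $\frac{\partial}{\partial u_s}\big(\frac{1}{w}\big) = -\frac{u_s}{w^3}$; second, differentiate $\gamma^{ik} = \delta_{ik} - \frac{u_i u_k}{w(1+w)}$ in $u_s$, which produces a term from the explicit $u_i, u_k$ factors (giving $\delta_{is} u_k + \delta_{ks} u_i$ over $w(1+w)$) and a term from differentiating the scalar $\frac{1}{w(1+w)}$; third, assemble $\frac{\partial a_{ij}}{\partial u_s} = \frac{\partial (1/w)}{\partial u_s}\gamma^{ik}u_{kl}\gamma^{lj} + \frac{1}{w}\frac{\partial \gamma^{ik}}{\partial u_s}u_{kl}\gamma^{lj} + \frac{1}{w}\gamma^{ik}u_{kl}\frac{\partial \gamma^{lj}}{\partial u_s}$. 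Substituting $a_{kt} = \frac{1}{w}\gamma^{ks}u_{st}\gamma^{tj}$-type relabelings back in lets one rewrite everything in terms of $a_{it}$, $\gamma^{sj}$, $u_t$, $u_j$ rather than $u_{kl}$. Contracting against $F^{ij}$ and using the identity $F^{ij}a_{ij} = \sum_i f_i \kappa_i$ (which follows since $A[u]$ and $\{h_{ij}\}$ are similar and $f$ is symmetric) for the piece coming from $\partial(1/w)/\partial u_s$, one collects the stated three groups of terms: the $-\frac{u_s}{w}\sum_i f_i\kappa_i$ term from the $1/w$ derivative, and the $-\frac{2}{w(1+w)}F^{ij}a_{it}(w u_t\gamma^{sj} + u_j\gamma^{ts})$ term from the two $\gamma$ derivatives, where the factor $2$ and the symmetrized form arise after using the symmetry $F^{ij} = F^{ji}$ to merge the $\partial\gamma^{ik}$ and $\partial\gamma^{lj}$ contributions.

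The main obstacle is purely bookkeeping: keeping track of which indices are summed, correctly collapsing the products $\gamma^{ik}u_{kl}\gamma^{lj}$ back into $a$'s after the differentiation has broken the symmetric product, and verifying that the contribution of $\frac{\partial}{\partial u_s}\frac{1}{w(1+w)}$ (the ``scalar part'' of the $\gamma$ derivative) either cancels or gets absorbed into the quoted form. One should check that $\frac{\partial}{\partial u_s}\frac{1}{w(1+w)} = -\frac{u_s(1+2w)}{w^3(1+w)^2}$ combines with the other scalar-type term so that only the displayed expression survives; this is where a sign slip is most likely, so I would double-check it against the known first-order derivative formulas for graphical hypersurfaces (e.g. by testing on the rotationally symmetric case $u_i = |Du|\delta_{i1}$ used already in the gradient estimate). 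Since this is exactly the formula from \cite{GS04}, I would also simply cite that reference and present the derivation as a verification rather than a from-scratch proof.
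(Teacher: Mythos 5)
The paper offers no proof of Lemma \ref{lemGS2}: the formula is simply quoted from \cite{GS04} with a one-line attribution. Your direct chain-rule computation $G^s = F^{ij}\,\partial a_{ij}/\partial p_s$ is the natural verification route and is presumably exactly what \cite{GS04} does, so the overall plan is sound and in fact goes further than the paper.

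But the step you wave through is where the trouble lies. You assert that the $\partial_{p_s}(1/w)$ piece contributes $-\frac{u_s}{w}\sum_i f_i\kappa_i$; it does not. Since $\gamma^{ik}u_{kl}\gamma^{lj}=w\,a_{ij}$ and $\partial_{p_s}(1/w)=-u_s/w^3$, contracting that piece against $F^{ij}$ yields
\begin{equation*}
F^{ij}\cdot\Big(-\frac{u_s}{w^3}\Big)\cdot w\,a_{ij}=-\frac{u_s}{w^2}\sum_i f_i\kappa_i,
\end{equation*}
one extra power of $1/w$. You can confirm this at a point with $Du=(p,0,\ldots,0)$ and $s=1$: there $(\gamma^{ij})=\mathrm{diag}(1/w,1,\ldots,1)$ and $(\partial_{p_1}\gamma^{ij})=\mathrm{diag}(-p/w^3,0,\ldots,0)$, so $\partial_{p_1}a_{ij}=-\frac{p}{w^2}\big(a_{ij}+\delta_{i1}a_{1j}+\delta_{j1}a_{i1}\big)$, giving $G^1=-\frac{p}{w^2}\sum f_i\kappa_i-\frac{2p}{w^2}\sum_i F^{i1}a_{i1}$, whereas \eqref{GS-2} evaluates at this point to $-\frac{p}{w}\sum f_i\kappa_i-\frac{2p}{w^2}\sum_i F^{i1}a_{i1}$. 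The second group matches but the first is off by a factor $1/w$, which points to a misprint in \eqref{GS-2} (the first denominator should be $w^2$). So your plan, actually executed, would fail to reproduce the stated formula; you have copied the claimed coefficient rather than derived it, and you have also put the ``most likely slip'' in the wrong place -- the scalar part of the $\gamma$-derivative comes out cleanly, as the test case shows, while the elementary $1/w$-derivative is where the mismatch is. Compute that contribution honestly rather than accepting it. For context, the discrepancy is harmless downstream: Lemma \ref{BC2-lem1} only uses the crude bound $|G^s\rho_{\alpha s}|\leq C\sum_i f_i|\kappa_i|$ together with $1\leq w\leq C$, so the extra $1/w$ changes nothing there -- but a proof of the lemma should get the constant right or flag the misprint.
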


Next, we establish the estimate
\begin{equation}
\label{BC2-3}
|u_{\alpha n} (0)| \leq C \mbox{  for } 1\leq \alpha \leq n - 1.
\end{equation}
Let $\omega_\delta = \{x \in \Omega: \rho (x') < x_n < \rho (x') + \delta^2 , |x'| < \delta\}$. Since  $\Omega$ is
uniformly $2$-convex for $n \geq 3$ and strictly convex for $n=2$, 
there exist two positive constants $\theta$ and $K$ satisfying
\begin{equation}
\label{BC2-5}
\begin{aligned}
(\kappa_1^b - 3 \theta, \ldots, \kappa_{n-1}^b - 3 \theta, 2 K) \in \Gamma_{3} & \mbox{ for } n \geq 3\\
(\kappa_1^b - 3 \theta, \ldots, \kappa_{n-1}^b - 3 \theta, 2 K) \in \Gamma_{2} & \mbox{ for } n = 2.
\end{aligned}
\end{equation}
Define
\begin{equation}
\label{BC2-6}
v = \rho (x') - x_n - \theta |x'|^2 + K x_n^2.
\end{equation}
We find that the boundary $\partial \omega_\delta$ consists of three parts: $\partial \omega_\delta
= \partial_1 \omega_\delta \cup \partial_2 \omega_\delta \cup \partial_3 \omega_\delta$, where
$\partial_1 \omega_\delta$, $\partial_2 \omega_\delta$ are  defined by $\{x_n=\rho\} \cap\overline{\omega}_{\delta}$, $\{ x_n=\rho+\delta^2\}\cap\overline{\omega}_{\delta}$
respectively, and $\partial_3 \omega_\delta$ is defined by $\{|x'| = \delta\}\cap\overline{\omega}_{\delta}$.

We see that when $\delta$ depending on $\theta$ and $K$ is sufficiently small, we have
\begin{equation}
\label{BC2-12}
\begin{aligned}
v \leq & - \frac{\theta}{2} |x'|^2, & \mbox{ on } \partial_1 \omega_\delta\\
v \leq & - \frac{\delta^2}{2}, & \mbox{ on } \partial_2 \omega_\delta\\
v \leq & - \frac{\theta \delta^2}{2},   & \mbox{ on } \partial_3 \omega_\delta.
\end{aligned}
\end{equation}
In view of \eqref{BC2-1} and \eqref{BC2-5},  $v$ is $3$-convex on $\overline{\omega}_\delta$ for $n\geq 3$
and strictly convex for $n=2$.
Thus, there exists an uniform constant $\eta_0 > 0$ depending only on $\theta$, $\p\Omega$
and $K$ satisfying
\[
\lambda (D^2 v - 2 \eta_0 I) \in \Gamma_{3} \mbox{ for } n \geq 3
  \mbox{ and } \lambda (D^2 v - 2 \eta_0 I) \in \Gamma_{2} \mbox{ for } n=2 \mbox{ on } \overline{\omega}_\delta.
\]
By Proposition \ref{prop-I}, we have
\begin{equation}
\label{BC2-4}
\lambda \left(\frac{1}{w} \{\gamma^{is} (v_{st} - 2 \eta_0 \delta_{st}) \gamma^{jt}\}\right) \in \Gamma_2 \subset \Gamma  \mbox{ on } \overline{\omega}_\delta.
\end{equation}
As \cite{Ivochkina90}, we consider
\[
W := \nabla'_\alpha u - \frac{1}{2} \sum_{1\leq \beta \leq n - 1} u_\beta^2
\]
defined on $\ol \Omega_\delta$ for some small $\delta$,
where
\[
\nabla'_\alpha u := u_\alpha + \rho_\alpha u_n, \mbox{ for } 1\leq \alpha \leq n - 1.
\]
Similar to Lemma 3.3 of \cite{JW21}, we need the following lemma.
\begin{lemma} If $\delta$ is sufficiently small, we have
\label{BC2-lem1}
\begin{equation}
\label{BC2-15}
LW \leq C \left(\psi^{1-1/(n-1)} + \psi |D W| + \sum_i G^{ii} + G^{ij} W_i W_j\right),
\end{equation}
where $C$ is a positive constant depending on $n$, $\|u\|_{C^1 (\ol \Omega)}$, $\|\psi^{1/(n-1)}\|_{C^1 (\ol \Omega \times [-\mu_0, \mu_0]\times \mathbb{S}^n)}$ and $\partial \Omega$, where $\mu_0 = \|u\|_{C^0 (\overline{\Omega})}$.
\end{lemma}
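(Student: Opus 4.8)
The plan is to compute $LW$ by applying $G^{ij}\partial_{ij}-\psi_{u_i}\partial_i$ directly to the two pieces of $W=\nabla'_\alpha u-\tfrac12\sum_{\beta<n}u_\beta^2$, and then control the resulting terms. For the quadratic piece, $\partial_{ij}\big(\tfrac12 u_\beta^2\big)=u_{\beta i}u_{\beta j}+u_\beta u_{\beta ij}$, so $G^{ij}\partial_{ij}\big(\tfrac12\sum_\beta u_\beta^2\big)=\sum_\beta G^{ij}u_{\beta i}u_{\beta j}+\sum_\beta u_\beta G^{ij}u_{\beta ij}$; the first term is nonnegative (it is $\sum_\beta G^{ij}u_{\beta i}u_{\beta j}\ge 0$ by ellipticity) and hence helps the inequality if it appears with the right sign, while the second is handled by differentiating the equation \eqref{1-1-1} once in $x_\beta$: $G^{ij}u_{\beta ij}+G^i u_{i\beta}=\psi_{x_\beta}+\psi_z u_\beta+\psi_{u_i}u_{i\beta}$, which after the $\psi_{u_i}\partial_i$ correction in $L$ reduces $L(\tfrac12\sum_\beta u_\beta^2)$ to terms bounded by $C(\psi^{1-1/(n-1)}+\sum_i G^{ii}+1)$ plus the good term, using Lemma \ref{lemGS2} to bound $|G^i|$ by $C\sum f_i|\kappa_i|\le C\sum_i G^{ii}$ and $C\psi$. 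For the main piece $\nabla'_\alpha u=u_\alpha+\rho_\alpha u_n$, I would similarly differentiate \eqref{1-1-1} in the tangential direction $\partial_\alpha$ (and use that $\rho$ and its derivatives are fixed $C^{2,1}$ data of $\partial\Omega$), so that $L(\nabla'_\alpha u)$ produces $\psi_{x_\alpha}+\psi_z\nabla'_\alpha u$-type terms bounded by $C\psi^{1-1/(n-1)}$, terms $G^{ij}(\rho_{\alpha})_{ij}u_n$ and $G^{ij}\rho_{\alpha i}u_{nj}$ which need care, and commutator terms coming from $\psi$ depending on $\nu$ (hence on $Du$).

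The key structural input — and the place where the special form of the $(\eta,n)$-curvature enters, exactly as flagged before the lemma — is the treatment of the terms involving $u_{nj}$ and $u_{nn}$ that are produced when $G^{ij}$ hits $\rho_{\alpha i}u_n$, together with the analogous $u_{\beta ij}$-type terms. Using the expansion \eqref{js-5}, namely
\[
K_\eta(M_u)=\frac{1}{w^n}\Big(\frac{1+|Du(n)|^2}{1+|Du|^2}\Big)^{n-1}S_{1;n}(D^2u,Du)\,u_{nn}^{n-1}+\sum_{i=1}^{n-2}P_i u_{nn}^i+P_0,
\]
one sees that $G^{nn}=\partial G/\partial r_{nn}$ is, to leading order, $(n-1)$ times the coefficient of $u_{nn}^{n-1}$ times $u_{nn}^{n-2}$, which one can compare with $\psi=K_\eta$ itself: morally $G^{nn}u_{nn}\approx (n-1)\psi+(\text{lower order})$, so products like $G^{nn}u_{nn}$ are controlled by $C\psi$ plus terms in the lower-order $P_i$'s (which depend only on $u_{\alpha\beta}$ with $\alpha+\beta<2n$ and are therefore bounded once the tangential and mixed entries are under control via \eqref{BC2-2}). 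This is the mechanism by which the degeneracy is absorbed: one never divides by $\psi$, and every term carrying a factor of a large second derivative is matched either against the good term $G^{ij}W_iW_j$ (after completing the square using \eqref{S-3}-type first-order relations, here the relation $u_{\alpha n}\approx W_n+\ldots$ on the boundary strip), against $\psi$ or $\psi^{1-1/(n-1)}$ directly, or against $\sum_i G^{ii}$.

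I expect the main obstacle to be precisely the bookkeeping of the $\nu$-dependence of $\psi$: differentiating \eqref{1-1-1} in $\partial_\alpha$ brings out $\psi_{u_i}u_{i\alpha}$ and, on differentiating again for the Hessian of the quadratic term, a term $\psi_{u_i u_j}u_{i\beta}u_{j\beta}$ and $\psi_{u_i}u_{i\beta\beta}$; the first of these is quadratic in second derivatives and must be absorbed into $G^{ij}W_iW_j$ or into $\sum_i G^{ii}$, and getting its sign/size right requires using \eqref{add-1} (i.e. $\psi^{1/(n-1)}\in C^{1,1}$, hence $|D_\nu\psi|$, $|D^2_\nu\psi|\le C\psi^{1-1/(n-1)}$, $C\psi^{1-2/(n-1)}$ after Bł ocki's lemma \eqref{add-2}) rather than any positivity of $\psi$. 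Once these terms are organized, the inequality \eqref{BC2-15} follows by collecting: $LW\le C\psi^{1-1/(n-1)}+C\psi|DW|+C\sum_i G^{ii}+G^{ij}W_iW_j$, where the $\psi|DW|$ term is the residue of the $G^i\partial_i W$ and $\psi_{u_i}\partial_i W$ contributions bounded via Lemma \ref{lemGS2}, and the constant $C$ has the stated dependence because $\rho\in C^{2,1}$ supplies $\|\rho\|_{C^{2,1}}$ and $\|\psi^{1/(n-1)}\|_{C^1}$ supplies the rest.
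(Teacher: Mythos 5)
Your outline correctly identifies the first step (differentiate \eqref{1-1-1} in the tangential direction and handle the quadratic piece algebraically), and the bound $|\psi_{x}|\le C\psi^{1-1/(n-1)}$ is indeed all that is needed for the $\psi$-derivative terms. However, there is a genuine gap in the core of the argument, and the mechanism you propose as the crux is not the one that makes the lemma work.

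The real difficulty is the term $G^sW_s$ produced by Lemma \ref{lemGS2}. After writing it out in the diagonalizing basis $(b_{ij})$, one is left with a term of the form $\frac{2}{w}\sum_{t,i}f_i\kappa_i(b_{ti}u_t)\gamma^{sl}b_{li}W_s$, which is \emph{linear} in $f_i\kappa_i$ and cannot be dropped. It must be absorbed by the good quadratic term $-\sum_{\beta\le n-1}G^{ij}u_{\beta i}u_{\beta j}$. The essential observation is the identity
\[
\sum_{\beta\le n-1}G^{ij}u_{\beta i}u_{\beta j}=w\sum_{\beta\le n-1}\sum_i\eta_{\beta i}^2\,f_i\kappa_i^2,
\qquad \eta_{ij}=\sum_s\gamma_{is}b_{sj},
\]
together with $\eta\,\eta^T=g$ and $|\det\eta|=w$. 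These algebraic facts force a case split: either $\sum_{\beta\le n-1}\eta_{\beta i}^2\ge\epsilon$ for all $i$ (in which case the quadratic term controls $\sum f_i\kappa_i^2$ uniformly and one completes the square against $G^{ij}W_iW_j$), or there is a deficient index $r$. In the latter case the determinant condition shows the remaining directions are non-deficient, and one treats the missing direction either via $\kappa_r\le 0$ (Lemma 2.20 of \cite{Guan14}) or, when $\kappa_r>0$, by showing $f_r\kappa_r\le C\psi$ in the dominant-eigenvalue regime — and it is precisely here, through the explicit computation of $\sigma_{n-1;i}(\lambda)$ for $\lambda_i=\sum_{j\ne i}\kappa_j$, that the special form of the $(\eta,n)$-curvature enters, not through the expansion \eqref{js-5}. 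Your proposal does not contain this case analysis, does not use the matrix identity, and places the structural weight on \eqref{js-5} and a heuristic $G^{nn}u_{nn}\approx(n-1)\psi$, which is relevant to the normal-normal estimate at the end of Section 5 but plays no role in the proof of this lemma. The second-order $\nu$-derivatives of $\psi$ you flag (``$\psi_{u_iu_j}u_{i\beta}u_{j\beta}$'') also do not arise: the equation is differentiated once in $\partial_\beta$ and then multiplied by $u_\beta$; only $C^1$ control of $\psi^{1/(n-1)}$ is needed here. As written, the proposal does not close the estimate.
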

\begin{proof}
We first note that, by differentiating the equation \eqref{1-1-1},
\begin{equation}
\label{BC2-16-1}
\begin{aligned}
G^{ij} W_{ij} + G^s W_s
  \leq \,& \nabla'_\alpha \psi - \sum_{\beta\leq n-1} u_\beta \psi_\beta + 2 G^{ij} u_{ni} \rho_{\alpha j} \\
     & - \sum_{\beta \leq n-1} G^{ij} u_{\beta i} u_{\beta j}
      + u_n G^{ij} \rho_{\alpha ij} + u_n G^s \rho_{\alpha s}.
\end{aligned}
\end{equation}
By \eqref{add-1} and \eqref{BC2-16-1}, we obtain
\begin{equation}
\label{BC2-16}
\begin{aligned}
L W + G^s W_s
  \leq \,& C \psi^{1-1/(n-1)} + 2 G^{ij} u_{ni} \rho_{\alpha j} \\
     & - \sum_{\beta \leq n-1} G^{ij} u_{\beta i} u_{\beta j}
      + u_n G^{ij} \rho_{\alpha ij} + u_n G^s \rho_{\alpha s}.
\end{aligned}
\end{equation}
We have
\[
G^{ij} = \frac{1}{w}\sum_{s,t} F^{st} \gamma^{is} \gamma^{tj} \mbox{ and }
u_{ij} = w\sum_{s,t} \gamma_{is} a_{st} \gamma_{tj}.
\]
It follows that
\[
\sum_{\beta \leq n - 1} G^{ij} u_{\beta i} u_{\beta j}
  = w \sum_{\beta \leq n - 1} \sum_{s,t}F^{ij} \gamma_{\beta s} \gamma_{\beta t} a_{si} a_{tj}.
\]
By \cite{GS04}, we can find an orthogonal matrix $B = (b_{ij})$ that can diagonalize $(a_{ij})$ and $(F^{ij})$ at the same time:
\[
F^{ij} =\sum_s b_{is} f_s b_{js} \mbox{ and } a_{ij} =\sum_s b_{is} \kappa_s b_{js}.
\]
Therefore, we get
\[
\sum_{\beta \leq n - 1} G^{ij} u_{\beta i} u_{\beta j}
  = w \sum_{\beta \leq n - 1} \sum_i\left(\sum_s\gamma_{\beta s} b_{si}\right)^2  f_i \kappa_i^2.
\]
Let the matrix $\eta = (\eta_{ij}) = (\sum_s\gamma_{is} b_{sj})$. We find $\eta \cdot \eta^T = g$ and $|\det (\eta)| = \sqrt{1 + |Du|^2}$.
Therefore, we obtain
\begin{equation}
\label{BC2-14}
\sum_{\beta \leq n - 1} G^{ij} u_{\beta i} u_{\beta j}
  = w \sum_{\beta \leq n - 1} \sum_i\eta_{\beta i}^2  f_i \kappa_i^2.
\end{equation}
We have
\begin{equation}
\label{BC2-19}
G^{ij} u_{ni} \rho_{\alpha j} = \sum_{i,t}f_i \kappa_i b_{si} \gamma^{js} b_{ti} \gamma_{nt} \rho_{\alpha j} \leq C \sum_i f_i |\kappa_i|.
\end{equation}
For any indices  $j, t$, we have
\[
F^{ij} a_{it} =\sum_{i,s,p} b_{is} f_s b_{js} b_{ip} \kappa_p b_{tp} = \sum_i f_i \kappa_i b_{ji} b_{ti}.
\]
Thus,
by \eqref{GS-2}, we find
\begin{equation}
\label{BC2-11}
\mid G^s \rho_{\alpha s} \mid \leq C \sum_i f_i |\kappa_i|.
\end{equation}
Combining \eqref{BC2-16}-\eqref{BC2-11},
we obtain
\begin{equation}
\label{BC2-26}
\begin{aligned}
LW + G^s W_s
     \leq \,& C \left(\psi^{1-1/(n-1)} + \sum_i G^{ii} + \sum_i f_i |\kappa_i|\right)\\
        & - w \sum_{\beta \leq n - 1} \sum_i\eta_{\beta i}^2 f_i \kappa_i^2.
\end{aligned}
\end{equation}
Now we consider the term $G^s W_s$. We have, by \eqref{GS-2} and the definition of the matrix $(b_{ij})$,
\begin{equation}
\label{BC2-17}
\begin{aligned}
- G^s W_s 
   = \,& \frac{1}{w} \sum_s\left(n \psi u_s + 2 \sum_{t,i}f_i \kappa_i (b_{ti} u_t) \gamma^{sl} b_{li}\right) W_s\\
   \leq \,& C \psi |D W| + \frac{2}{w} \sum_{t,i}f_i \kappa_i (b_{ti} u_t) \gamma^{sl} b_{li} W_s.
\end{aligned}
\end{equation}
We divide into two cases to further discuss: (a) $\sum_{\beta \leq n - 1} \eta_{\beta i}^2 \geq \epsilon$ for all $i$; and
(b) $\sum_{\beta \leq n - 1} \eta_{\beta r}^2 < \epsilon$ for some index $1 \leq r \leq n$, where
 $\epsilon$ is some positive constant,  which will be determined later.

For the case (a), by \eqref{BC2-14}, we have
\[
\sum_{\beta \leq n - 1} G^{ij} u_{\beta i} u_{\beta j}
  \geq \epsilon \sum_{i} f_i \kappa_i^2.
\]
By Cauchy-Schwarz inequality, we get
\begin{equation}
\label{BC2-31}
\frac{2}{w} \kappa_i (b_{ti} u_t) \gamma^{sl} b_{li} W_s
  \leq \frac{\epsilon}{2} \kappa_i^2 + \frac{C}{\epsilon } (\gamma^{sl} b_{li} W_s)^2.
\end{equation}
It follows that
\[
- G^s W_s \leq C \psi |D W| + \frac{\epsilon}{2} f_i \kappa_i^2 + \frac{C}{\epsilon} G^{ij} W_i W_j.
\]
It is clear that, for any $\epsilon_1 > 0$,
$$\sum_i f_i |\kappa_i|\leq \frac{1}{\epsilon_1} \sum_i f_i + \epsilon_1 \sum_i f_i \kappa_i^2
   \leq C\left(\frac{1}{\epsilon_1} \sum_i G^{ii}+ \epsilon_1 \sum_i f_i \kappa_i^2\right).$$
Combining the previous four inequalities with \eqref{BC2-26}, \eqref{BC2-15} follows.

For the case (b), as in \cite{Ivochkina90}, we have
\[
1 \leq \det (\eta) \leq \eta_{nr} \det (\eta') + C_1 \epsilon
  \leq \sqrt{1 + \mu_1^2} \mid\det (\eta')\mid + C_1 \epsilon,
\]
where $\eta' := \{\eta_{\alpha \beta}\}_{\alpha\neq n, \beta \neq r}$, $\mu_1 := \|D u\|_{C^0 (\overline{\Omega})}$ and $C_1$ is a positive constant depending
only on $n$ and $\mu_1$.
We choose  sufficiently small $\epsilon$ satisfying $C_1\epsilon < \frac{1}{2}$. Thus, we get
\[
\mid\det (\eta')\mid \geq \frac{1}{2 \sqrt{1 + \mu_1^2}}.
\]
On the other hand, for any fixed $\alpha\neq r $, we have
\[
\mid\det (\eta')\mid \leq C \sum_{\beta \neq n} |\eta_{\beta \alpha}|.
\]
for some positive constant $C$ depending only on $n$ and $\mu_1$. Therefore, combining the above two inequalities, we get, for any $i\neq r$,
$$\sum_{\beta\leq n-1}\eta^2_{\beta i}\geq c_1$$ for some positive constant $c_1$ depending on $\|u\|_{C^1 (\overline{\Omega})}$, which implies, in view of \eqref{BC2-14},
\begin{equation}
\label{BC2-13}
\sum_{\beta \leq n - 1} G^{ij} u_{\beta i} u_{\beta j} \geq c_1 \sum_{i \neq r} f_i  \kappa_i^2.
\end{equation}
If $\kappa_r \leq 0$, by Lemma 2.20 of \cite{Guan14}, we have
\[
 \sum_{i \neq r} f_i  \kappa_i^2
    \geq \frac{1}{n+1} \sum_{i = 1}^n f_i \kappa_i^2.
\]
Thus \eqref{BC2-15} follows using a similar argument as the Case (a).

Hence, in the following, we may assume $\kappa_r > 0$. Without loss of generality, assume $r = 1$.
Now we consider two cases.

\noindent
{\bf Case (b-1).} \ $|\kappa_i| \leq \epsilon_0 \kappa_1$ for all $i\geq2$, where the positive constant
$\epsilon_0$ will be chosen later.

In this case, as in Section 4, we find,
\[
(1 - (n-2)\epsilon_0) \kappa_1 \leq \lambda_i \leq (1 + (n+2)\epsilon_0) \kappa_1, \mbox{ for } i \geq 2,
\]
where $\lambda_1, \ldots, \lambda_n$ are defined in \eqref{lambda}.
By the equation \eqref{js-2}, fixing the constant $\epsilon_0$ sufficiently small,
\[
\lambda_1 = \frac{\psi}{\lambda_2 \cdots \lambda_n} \leq C \kappa_1^{1-n} \psi.
\]
It follows that
\[
\sigma_{n-1;i} (\lambda) = \prod_{j\neq i} \lambda_j \leq C \kappa_1^{n-2} \kappa_1^{1-n} \psi = C \kappa_1^{-1} \psi,
   \mbox{ for } i \geq 2.
\]
Therefore,
\[
f_1 (\kappa) = \sum_{i \neq 1} \sigma_{n-1; i} (\lambda) \leq C \kappa_1^{-1} \psi
\]
and
\[
f_1 \kappa_1 \leq C \psi.
\]
By \eqref{BC2-17} and \eqref{BC2-31} and the Cauchy-Schwarz inequality, we have, for any $\epsilon > 0$,
\begin{equation}
\label{BC2-30}
\begin{aligned}
- G^s W_s \leq \,& C \psi |DW| + \frac{2}{w} \sum_{i \neq 1}f_i \kappa_i (b_{ti} u_t) \gamma^{sl} b_{li} W_s\\
  \leq \,& C \psi |DW| + C \sum_{i\neq 1} f_i |\kappa_i| |\gamma^{sl} b_{li} W_s| \\
  \leq \,& C \psi |DW| + \epsilon \sum_{i \neq 1} f_i \kappa_i^2
     + \frac{C}{\epsilon} \sum_{i=1}^n f_i \gamma^{sl} b_{li} W_s \gamma^{tk} b_{ki} W_t\\
  \leq \,& C \psi |DW| + \epsilon \sum_{i \neq 1} f_i \kappa_i^2 + \frac{C}{\epsilon} G^{ij} W_i W_j.
\end{aligned}
\end{equation}
Using \eqref{BC2-13}, \eqref{BC2-15} is proved by fixing $\epsilon$ sufficiently small.

\noindent
{\bf Case (b-2).} \ $|\kappa_{i_0}| > \epsilon_0 \kappa_1$ for some $i_0 \geq2$,

First we find
\[
\begin{aligned}
\gamma^{sl} b_{l1} W_s = \,& \gamma^{sl} b_{l1} \Big(u_{\alpha s} + \rho_\alpha u_{ns} - \sum_{\beta \leq n-1} u_\beta u_{\beta s}
   + \rho_{\alpha s} u_n\Big)\\
     = \,& w \Big(\eta_{\alpha 1} + \rho_\alpha \eta_{n 1} - \sum_{\beta \leq n-1} u_\beta \eta_{\beta 1}
   \Big) \kappa_1 + \gamma^{sl} b_{l1} \rho_{\alpha s} u_n.
\end{aligned}
\]
It follows that
\begin{equation}
\label{BC2-22}
\mid\gamma^{sl} b_{l1} W_s\mid \leq C w (\epsilon + |\rho_\alpha|) \kappa_1 + C.
\end{equation}
It is obvious that
\[
f_1 \kappa_1 = n \psi - \sum_{i \neq 1} f_i \kappa_i.
\]
Then we have
\[
\begin{aligned}
 \frac{2}{w} f_1 \kappa_1 \mid(\sum_tb_{t1} u_t) \,& \gamma^{sl} b_{l1} W_s\mid = \frac{2}{w} \big(n\psi - \sum_{i \neq 1} f_i \kappa_i\big)
    \mid(\sum_tb_{t1} u_t) \gamma^{sl} b_{l1} W_s\mid  \\
 \leq \,& C \psi |D W| + C (\epsilon + |\rho_\alpha|) \sum_{i \neq 1} f_i |\kappa_i| \kappa_1 + C \psi + C \sum_{i\neq 1} f_i |\kappa_i|\\
\leq \,& C \psi |D W| + \left(\epsilon_0^{-1} C (\epsilon + |\rho_\alpha|) + \epsilon_1\right) \sum_{i\neq 1} f_i \kappa_i^2
   + C \psi + \frac{C}{\epsilon_1} \sum_{i\neq 1} f_i
\end{aligned}
\]
for any $\epsilon_1 > 0$.
We can choose sufficiently small $\delta$ $\epsilon$ and $\epsilon_1$ satisfying
\[
\left(\epsilon_0^{-1} C (\epsilon + |\rho_\alpha|) + \epsilon_1\right) < \frac{c_1}{4}.
\]
Therefore, \eqref{BC2-15} follows by \eqref{BC2-13} as in Case (b-1).
\end{proof}
To proceed we consider the following function on $\overline{\Omega}_\delta$, for sufficiently small $\delta$,
\begin{equation}
\label{BC2-20}
\Psi := v - td + \frac{N}{2} d^2,
\end{equation}
where $v(x)$ is defined by \eqref{BC2-6}, $d (x) := \mathrm{dist} (x, \partial \Omega)$ is the distance from $x$ to the boundary $\partial \Omega$,
$t,N$ are two positive constants to be determined later. We first suppose $\delta < 2t/N$ so that
\begin{equation}\label{new3.1}
- t d + \frac{N}{2} d^2 \leq 0 \mbox{ on } \overline{\Omega}_\delta.
\end{equation}
Let
\begin{equation}\label{new3.2}
\tilde{W} := 1 - \exp\{- b W\}.
\end{equation}
By \eqref{BC2-15}, we can choose the constant $b$ sufficiently large such that
\begin{equation}\label{new3.4}
L \tilde{W} \leq C (\psi^{1-1/(n-1)} + \psi |D \tilde{W}| + \sum_i G^{ii}).
\end{equation}
We consider the function
\[
\Phi := R \Psi - \tilde{W},
\]
where $R$ is a positive constant sufficiently large to be chosen. We shall prove
\begin{equation}
\label{add-6}
\Phi \leq 0 \mbox{ on } \overline{\Omega}_\delta
\end{equation}
by choosing suitable constants $\delta$, $t$, $N$ and $R$. We first deal with the case that the maximum of $\Phi$
is attained at an interior point $x_0 \in \Omega_\delta$. Now we consider two cases: (i) $\psi (x_0, u(x_0), Du(x_0)) \geq \epsilon_0$
and (ii)  $\psi (x_0, u(x_0), Du(x_0)) < \epsilon_0$, where $\epsilon_0$ is a positive constant to be determined later.

\textbf{Case (i).}
Since $f^{1/n}$ is concave in $\Gamma$ and homogeneous of degree one and $|Dd| \equiv 1$ on the boundary $\partial \Omega$, we have, by \eqref{BC2-4} and \eqref{cj-2},
\[
\begin{aligned}
& \frac{1}{n} \psi^{\frac{1}{n} - 1}G^{ij} (D^2 v - \eta_0 I + N D d \otimes D d)_{ij}\\
   \geq \,& G^{1/n} (D^2 v - \eta_0 I + N D d \otimes D d, Du)
   \geq \mu (N),
\end{aligned}
\]
at $x_0$, where $\lim_{N \rightarrow + \infty}\mu (N) = +\infty$.
Consequently, at $x_0$ we have
\[
\begin{aligned}
G^{ij} \Psi_{ij} \geq \,& n \epsilon_0^{1-1/n}\mu(N) + \eta_0 \sum_i G^{ii} + (N d - t) G^{ij} d_{ij}\\
  \geq \,& 2 \mu_1(N) + (\eta_0 - C N \delta - C t) \sum_i G^{ii},
\end{aligned}
\]
where $\mu_1 (N) := n \epsilon_0^{1-1/n}\mu(N)/2$.
Note that
\[
|D \Psi| = |D v - t D d + N d D d| \leq C (1+t) + C\delta N \leq \mu_1(N)^{1/2} \mbox{ in } \Omega_\delta,
\]
if $N$ is sufficiently large and $\delta < \sqrt{\mu_1 (N)}/2CN$.
Therefore, we further take $\delta$ and $t$ sufficiently small such that $C N \delta + C t < \eta_0/2$. We thus obtain, at $x_0$,
\begin{equation}
\label{BC2-21}
G^{ij} \Psi_{ij} \geq \mu_1 (N) + \mu_1(N)^{1/2} |D \Psi| + \frac{\eta_0}{2} \sum_i G^{ii}.
\end{equation}
Therefore, by \eqref{new3.4} and \eqref{BC2-21} we have, at $x_0$, 
\[
\begin{aligned}
0 \geq L \Phi
  \geq \,& R\mu_1 (N) + R\mu_1(N)^{1/2} |D \Psi| - R\psi_{u_i} \Psi_i + \frac{R\eta_0}{2} \sum G^{ii}\\
     & - C \left(1 + |D \tilde{W}| + \sum G^{ii}\right)\\
 \geq \,& R \mu_1(N) - C + R (\mu_1(N)^{1/2} - C)|D \Psi| + \left(\frac{R\eta_0}{2} - C\right)\sum G^{ii} > 0
\end{aligned}
\]
provided $N$ and $R$ are sufficiently large which is a contradiction.

\textbf{Case (ii)} Let $H$ denote the mean curvature of $M_u$ at $X_0 = (x_0, u(x_0))$. We consider two cases:
(ii-a) $H \leq A$ and (ii-b) $H > A$, where $A$ is a positive constant sufficiently large to be chosen.

\textbf{Case (ii-a).} Let
\[
\tilde{a}_{ij} = w^{-1} g^{il} u_{lj}.
\]
First we note that  $|\tilde{a}_{ij} (x_0)| \leq \hat{C} H \leq \hat{C}A$, $i,j = 1, \ldots, n$, for some positive constant
$\hat{C}$ depending only on $n$ since $g^{-1} \eta$ is positive definite.
We see, at $x_0$,
\[
\begin{aligned}
0 = \,& \Phi_i = (R \Psi - \tilde{W})_i\\
  = \,& R (v_i - t d_i + Ndd_i) - b \exp\{-bW\} \Big(u_{\alpha i}
  + \rho_\alpha u_{ni} + \rho_{\alpha i} u_n - \sum_{\beta \leq n-1} u_\beta u_{\beta i}\Big).
\end{aligned}
\]
for $i = 1, \ldots, n$,
Therefore, we get
\[
\begin{aligned}
\ & \tilde{a}_{n\alpha} + \rho_\alpha \tilde{a}_{nn} +w^{-1}g^{ni}\rho_{i\alpha} u_n- \sum_{\beta \leq n-1} u_\beta \tilde{a}_{n \beta}\\
=\ & w^{-1} g^{ni} \Big(u_{i \alpha}
  + \rho_\alpha u_{in} + \rho_{i \alpha} u_n - \sum_{\beta \leq n-1} u_\beta u_{i \beta}\Big)\\
 = \ & R b^{-1} \exp\{b W\}w^{-1}\big(g^{ni} v_i - tg^{ni} d_i + Nd g^{ni}d_i\big).
\end{aligned}
\]
Note that $v_n (0) = -1$ and that $v_\gamma (0) = d_\gamma (0) = 0$ for $1\leq \gamma \leq n-1$, we see
at $x_0$, if we let $\delta$ and $t$ be sufficiently small,
\[
\tilde{a}_{n\alpha} \leq - \hat{c} R + C A,
\]
where $\hat{c}$ and $C$ are positive constants depending only on $\|u\|_{C^1 (\overline{\Omega})}$.
We then get a contradiction
if $R \gg A$ is sufficiently large since
$|\tilde{a}_{n\alpha}(x_0)| \leq \hat{C}A$.

\textbf{Case (ii-b).} Note that, by \eqref{NM-1},
\[
\sum G^{ii} = \frac{1}{w} g^{ij} F^{ij} \geq \gamma_1 \sigma_{n-1} (\eta) \geq \gamma_2 H^{1/(n-1)} \psi^{1-1/(n-1)}
   \geq \gamma_2 A^{1/(n-1)} \psi^{1-1/(n-1)}
\]
for some positive constant $\gamma_2$ depending only on $n$ and $\|u\|_{C^1 (\overline{\Omega})}$.
By \eqref{add-1}, we find
\[
|\psi_{u_i}| \leq C \psi^{1-1/(n-1)}, \mbox{ for all }i = 1, \ldots, n.
\]
Similar to \eqref{BC2-21}, we have
\[
G^{ij} \Psi_{ij} \geq \frac{\eta_0}{2} \sum G^{ii}
\]
and
\begin{equation}
\label{add-7}
\begin{aligned}
L \Psi \geq \,& \frac{\eta_0}{2} \sum G^{ii} - C \psi^{1-1/(n-1)}\\ \geq \,&
  \frac{\eta_0}{4} \sum G^{ii} + \left(\frac{\eta_0 \gamma_2}{4} A^{1/(n-1)} - C\right)\psi^{1-1/(n-1)}\\
  \geq \,& \frac{\eta_0}{4} \sum G^{ii} + \frac{\eta_0 \gamma_2}{8} A^{1/(n-1)}\psi^{1-1/(n-1)}
\end{aligned}
\end{equation}
by fixing $A$ sufficiently large. Next, we see
\[
D \tilde{W} (x_0) = R D \Psi (x_0).\]
Combining \eqref{new3.4} and \eqref{add-7}, we have, at $x_0$,
\[
\begin{aligned}
0 \geq \,& L \Phi \\
\geq \,& \left(\frac{\theta R}{16} - C\right)\sum G^{ii} + \left(\frac{\theta \gamma_2 R}{32} A^{1/(n-1)} - C\right)
  \psi^{1-1/(n-1)} - C \psi > 0
\end{aligned}
\]
if we fix $R$ sufficiently large and $\epsilon_0$ sufficiently small, which is a contradiction.

In view of Case (i) and Case (ii), the function $\Phi$ cannot attain its maximum at an interior point of $\Omega_\delta$
when $R$, $N$ are large enough and $\delta$, $t$ are small enough. By \eqref{BC2-12} and \eqref{new3.1}, we can fix
$\delta$ smaller and $R$ larger such that $\Phi \leq 0$ on $\partial \Omega_\delta$. Thus, \eqref{add-6} is proved.
Consequently, we have $(R \Psi - \tilde{W})_n (0) \leq 0$ since  $(R \Psi - \tilde{W}) (0) = 0$. Therefore, we obtain
\[
u_{n \alpha} (0) \geq - C,
\]
The above arguments also hold with respect to $- \nabla'_\alpha u - 1/2 \sum_{\beta \leq n - 1} u_\beta^2$.
Hence, we obtain \eqref{BC2-3}.

It suffices to establish the upper bound of $u_{nn}(0)$ since $H [M_u] > 0$.
By \eqref{add-8} and \eqref{NM-2}, we have
\begin{equation}
\label{BC2-9}
H [M_u] = \sigma_1 (\kappa [M_u]) = \frac{1}{n-1} \sigma_1 (\eta) \geq c_0 \sigma_n^{1/n} (\eta) \geq c_0 \ul \psi^{1/n}
\end{equation}
for some $c_0$ depending only on $n$.
Since $\ul \psi \not \equiv 0$ on $[\inf u, 0]$, there exists a point $x_0 \in \ol \Omega$ such that
\[
\ul \psi (x_0, u (x_0)) \geq \delta_0 > 0
\]
for some positive constant $\delta_0$.
By the continuity of $\ul \psi$ and that we have establish the $C^1$ estimates,
there exists a neighbourhood $U$ of $x_0$ such that
\[
\inf_{U \cap \overline{\Omega}} \ul \psi (x, u(x)) \geq \frac{\delta_0}{2}.
\]
Fix $U_1 \subset \subset U$. As \cite{JW21}, let $\overline{\psi}$ be a smooth function such that
$\overline{\psi} = c_0 (\delta_0/2)^{1/n}$ in $U_1$,
$\overline{\psi} \leq c_0 (\delta_0/2)^{1/n}$ in $U - U_1$ and $\overline{\psi} = 0$ outside $U$.

Next we prove
\begin{equation}
\label{BC2-10}
u_n (0) \leq - \gamma_1
\end{equation}
for some unform positive constant $\gamma_1$.



By Theorem 16.10 of \cite{GT98}, there exists a unique solution $\ol u$ of the prescribed mean curvature equation
\[
\sigma_1 (\kappa[M_{\ol u}]) = \epsilon_2 \overline{\psi} \mbox{ in } \Omega
\]
with $\ol u = 0$ on $\partial \Omega$,  if the positive constant $\epsilon_2<1$ is sufficiently small.
Since the above equation is uniform elliptic, by Hopf's lemma,
we see $\ol u < 0$ in $\Omega$ and
$\ol u_\mu < 0$ on $\partial \Omega$, where $\mu$ is the unit interior normal with respect to $\partial \Omega$.
Since $\partial \Omega$ is compact, there exists a uniform constant $\gamma_1 > 0$ such that
$\ol u_\mu \leq - \gamma_1$ on $\partial \Omega$. By \eqref{BC2-9}, we have
$$\sigma_1 (\kappa[M_{u}]) \geq \sigma_1 (\kappa[M_{\ol u}]) \mbox{ in } \Omega.$$
Therefore, by the maximum principle and $u = \ol u = 0 \mbox{ on } \partial \Omega$, we find
\[
u \leq \ol u \mbox{ in } \Omega
\]
and $u_n (0) \leq \ol u_n (0) \leq - \gamma_1$ which is \eqref{BC2-10}.

It is clear that, at the origin,
\[
u_{\alpha \beta} = - (u_n) \kappa^b_\alpha \delta_{\alpha \beta}, \mbox{ for } 1 \leq \alpha, \beta \leq n-1
\]
and
\[
g^{ij} = \delta_{ij} - \frac{|Du|^2}{w^2} \delta_{in} \delta_{jn}.
\]
It follows that, at the origin,
\[
S_{1;n} (D^2 u, Du) = \sum_{\alpha \leq n-1} u_{\alpha \alpha} = - u_n \sigma_1 (\kappa^b) \geq \gamma_2 > 0
\]
for some uniform positive constant $\gamma_2$ by \eqref{BC2-10}.
Thus, by the equation \eqref{js-2}, \eqref{js-5}, \eqref{BC2-2} and \eqref{BC2-3},
we get upper bound of $u_{nn}(0)$. Theorem \ref{thm-boundary} is proved.


Now we prove Theorem \ref{js-thm2}. We note that we have also established the $C^2$ estimates for \eqref{js-2} when
it is non-degenerate.
The $C^{2, \alpha}$ estimates can be established by Evans-Krylov theory since the equation \eqref{js-2}
is concave uniformly elliptic with respect to admissible solutions in the non-degenerate case.
Higher order estimates can be derived using Schauder theory. Then Theorem \ref{js-thm2} can be proved
by standard arguments using the continuity method. The reader is referred to \cite{CNSV} for details.

Finally, Theorem \ref{js-thm1} can be proved by approximation as in \cite{JW21}.

\bigskip

{\bf Conflict of interest statement.} The authors declare that there is no conflict of interest.

\end{document}